\numberwithin{equation}{section}
\newcommand{\R}{{\mathbb R}}
\newcommand{\cred}{\color{red}}
\definecolor{darkgreen}{rgb}{0,0.7,0.1}
\newtheorem{theorem}{Theorem}[section]
\newtheorem{lemma}[theorem]{Lemma}
\newtheorem{proposition}[theorem]{Proposition}
\newtheorem{remark}[theorem]{Remark}
\begin{document}

\title{\vskip-0.3in Singular solutions for coercive quasilinear elliptic inequalities with nonlocal terms}

\author{Roberta Filippucci\footnote{Dipartimento di Matematica e Informatica, Universit\'a degli Studi di Perugia, Via Vanvitelli 1, 06123 Perugia, Italy; {\tt roberta.filippucci@unipg.it}}
      $\quad$   and    $\quad$
{Marius Ghergu\footnote{School of Mathematics and Statistics,
    University College Dublin, Belfield, Dublin 4, Ireland; {\tt
      marius.ghergu@ucd.ie}}\;\,\footnote{Institute of Mathematics Simion Stoilow of the Romanian Academy, 21 Calea Grivitei St., 010702 Bucharest, Romania}}
}


\maketitle

\begin{abstract}
We study the inequality
$$
{\rm div}\big(|x|^{-\alpha}|\nabla u|^{m-2}\nabla u\big)\geq  (I_\beta\ast u^p)u^q \quad\mbox{ in } B_1\setminus\{0\}\subset \R^N,
$$
where $\alpha>0$, $N\geq 1$, $m>1$,  $p, q>m-1$ and $I_\beta$ denotes the Riesz potential of order $\beta\in(0, N)$. We obtain sharp conditions in terms of these parameters for which positive singular solutions exist. We further establish the asymptotic profile of singular solutions to the double inequality
$$
a(I_\beta\ast u^p)u^q\geq {\rm div}\big(|x|^{-\alpha}|\nabla u|^{m-2}\nabla u\big)\geq  b(I_\beta\ast u^p)u^q \quad\mbox{ in } B_1\setminus\{0\}\subset \R^N,
$$
where $a\geq b>0$ are constants.
\end{abstract}

\noindent{\bf Keywords:} Quasilinear elliptic inequalities; weighted $m$-Laplace operator; singular solutions.

\medskip

\noindent{\bf 2010 AMS MSC:} 35J62, 35A23, 35B09, 35B53


\section{Introduction and the main results}\label{sec1}
In this paper we are concerned with the following quasilinear elliptic inequality
\begin{equation}\label{main}
{\rm div}\big(|x|^{-\alpha}|\nabla u|^{m-2}\nabla u\big)\geq  (I_\beta\ast u^p)u^q \quad\mbox{ in } B_1\setminus\{0\}\subset \R^N,
\end{equation}
and with the double inequality
\begin{equation}\label{main1}
a(I_\beta\ast u^p)u^q\geq {\rm div}\big(|x|^{-\alpha}|\nabla u|^{m-2}\nabla u\big)\geq  b(I_\beta\ast u^p)u^q \quad\mbox{ in } B_1\setminus\{0\},
\end{equation}
where  $\alpha>0$, $\beta\in (0, N)$, $m>1$, $N\geq 1$, $p>0$, $q>m-1$
and $a\geq b>0$.

Throughout this paper, $B_R(z)$ denotes the open ball in $\R^N$, $N\geq 1$, with center at $z\in \R^N$ and having radius $R>0$. When $z=0$, we simply use $B_R$ instead of $B_R(0)$.

The quantity $I_\beta\ast u^p$ represents the convolution operation
$$ (I_\beta\ast u^p)(x)=\int_{B_1} I_\beta(x-y)u^p(y) dy,$$ where $I_\beta:\R^N\to \R$ is the {\it Riesz
potential} of order $\beta\in (0,N)$  given by $$
I_\beta(x)=\frac{A_\beta}{|x|^{N-\beta}}\,,\quad \mbox{ with }\;
A_\beta=\frac{\Gamma\big(\frac{N-\beta}{2}\big)}{\Gamma(\frac{\beta}{2}\big) \pi^{N/2}2^\beta}=
C(N, \beta)> 0. $$
By a positive solution of  \eqref{main} we understand a function
$u\in W^{1,m}_{loc}(B_1\setminus\{0\})\cap C(\overline B_1\setminus\{0\})$
which satisfies:

\begin{itemize}
\item $u>0$, \, $u\in L^p(B_1)$, \,\, ${\rm div}(|x|^{-\alpha}|\nabla u|^{m-2}\nabla u),\,\, (I_\beta*u^p)u^q\in L^{1}_{loc}(B_1\setminus\{0\})$;

\item for any $\phi \in C_{c}^{\infty}(\Omega)$, $\phi\geq 0$ we have
\begin{equation*}
\int_{B_1} |x|^{-\alpha} |\nabla u|^{m-2}\,\nabla u  \cdot \nabla \phi +\int_{B_1}(I_\beta *u^p)u^q \phi\leq 0.
\end{equation*}
\end{itemize}
Solutions of \eqref{main} are called singular if
$$
\displaystyle \limsup_{x\to 0}u(x)=\infty.
$$
\noindent{\bf Remark.} Let us point out that the condition $u\in L^p(B_1)$ is needed to ensure $I_\beta  \ast u^p$ is finite almost everywhere. In fact, these two conditions are equivalent since for $x\in B_1\setminus\{0\}$ we have
\begin{equation}\label{uLp}
\infty>(I_\beta *u^p)(x)=C\int_{B_1}\frac{u^{p}(y)}{|x-y|^{N-\beta }}dy
\geq C\int_{B_1}\frac{u^{p}(y)}{2^{N-\beta }}dy,
\end{equation}
so $u\in L^p(B_1)$. Conversely, if $u\in L^p(B_1)$ then, by standard properties of convolution (see, e.g., \cite[Chapter 2]{LL2001}) one has $I_\beta*u^p\in L^1(B_1)$.

The study of quasilinear elliptic inequalities has received constant attention in the last decades, one general example is the inequality
\begin{equation}\label{noncoercive}
 -{\rm div}[\mathcal{A}(x,u, \nabla u)]\geq f(x,u) \quad\mbox{ in }\Omega,
\end{equation}
which has appeared in many research papers under various structural hypotheses on ${\mathcal{A}}$. The work by Mitidieri and Pohozaev \cite{MP2001} contains many results in this direction and provides the reader with a range of methods to investigate the nonexistence of a solution. The equality case in \eqref{noncoercive} naturally leads to a proper differential equation  and has even a longer history. We only mention here the seminal work of Gidas and Spruck \cite{GS1981} for the semilinear case with power type nonlinearity but also some more recent results \cite{FPR2008}, \cite{GKS2019}, \cite{R2019} dealing with other different situations.

A systematic study of the inequality
$$
 L_\mathcal{A} u= -{\rm div}[\mathcal{A}(x,u, \nabla u)]\geq |x|^{\sigma}u^q \quad\mbox{ in }\Omega,
$$
along with the corresponding system $$ \left\{
\begin{aligned}
&L_\mathcal{A} u= -{\rm div}[\mathcal{A}(x, u, \nabla u)]\geq |x|^au^pv^q\\
&L_\mathcal{B} v= -{\rm div}[\mathcal{B}(x, v, \nabla v)]\geq |x|^b u^rv^s
\end{aligned}
\right.\qquad\mbox{ in } \Omega,
$$
is carried out in \cite{BP2001} for various domains $\Omega\subset \R^N$, such as open balls and their
complements, half balls and half spaces (see also \cite{DM2018} for the case of general nonlinearities). More recently, quasilinear elliptic inequalities and systems integrate the gradient term in the nonlinearity: the authors in \cite{F2009} and \cite{FPR2010} discuss coercive quasilinear inequalities in the form
$$
{\rm div}(g(x)|\nabla u|^{p-2}\nabla u)\geq h(x)f(u)\ell(|\nabla u|)\quad\mbox{ in }\R^N,
$$
and respectively
$$
{\rm div}(h(x) g(u) A(|\nabla u|) \nabla u)\geq f(x, u, \nabla u) \quad\mbox{ in }\R^N.
$$
Systems of quasilinear elliptic inequalities of type
$$
\left\{
\begin{aligned}
& -{\rm div}(h_1(x)A(|\nabla u|)\nabla u)\geq f(x,u,v,\nabla u, \nabla v)\\
& -{\rm div}(h_2(x)B(|\nabla v|)\nabla v)\geq g(x,u,v,\nabla u, \nabla v)
\end{aligned}
\right.\qquad\mbox{ in } \R^N
$$
and
$$
\left\{
\begin{aligned}
& -{\rm div}[\mathcal{A}(x, u, \nabla u)]\geq a(x) u^{p_1}v^{q_1}|\nabla u|^{\theta_1}\\
& -{\rm div}[\mathcal{B}(x, v, \nabla v)]\geq b(x) u^{p_2}v^{q_2}|\nabla u|^{\theta_2}
\end{aligned}
\right.\qquad\mbox{ in } \R^N
$$
are studied in \cite{F2011} and \cite{F2013} respectively.

To the best of our knowledge, the first results dealing with quasilinear elliptic inequalities in the presence of nonlocal terms appear in \cite{CMP2008}. The authors in \cite{CMP2008} obtain
local estimates and Liouville type results for
$$
 -{\rm div}[\mathcal{A}(x,u, \nabla u)]\geq K\ast u^q \quad\mbox{ in }\R^N,
 $$
where $K\in L^{1}_{loc}(\R^N)$, $K\geq 0$ and $q>0$.
Extensions to these results were recently obtained in \cite{GKS2020} in the case $K(x)=|x|^{-\beta}$, $\beta\in (0, N)$.
The related equation
$$
-\Delta u+V(x) u= \big(|x|^{-\beta}\ast u^p\big)u^q
$$
is known in the literature under the name of Choquard (or Choquard-Pekar) equation and arises in various fields ranging from quantum physics to one-component plasma and Newtonian relativity. A survey on the mathematical results on the Choquard equation  is presented in \cite{MV2017}.
Solutions to the Choquard equation featuring isolated singularities are studied in \cite{CZ2016} and \cite{CZ2018}. In \cite{GT2016} and \cite{G2020} it is investigated the
behaviour around the origin of singular solutions to
$$
0\leq -\Delta u\leq \big(I_\alpha\ast u^p\big)u^q\quad\mbox{ in }B_1\setminus\{0\}
$$
and
$$
0\leq -\Delta u\leq \big(I_\alpha\ast u^p\big)   \big(I_\beta\ast u^q\big)  \quad\mbox{ in }B_1\setminus\{0\}.
$$
respectively.
Returning to inequality \eqref{main}, we are now ready to state our first main result.

\begin{theorem}\label{thm1} Assume  $m>1$, $N\geq 1$, $q>m-1$, $\alpha>0$ and $\beta\in (0,N)$.
\begin{enumerate}
\item[\rm (i)] If $N\leq m+\alpha$ then \eqref{main} has always singular solutions.

\item[\rm (ii)] If $N>m+\alpha$ and $p>m-1$ then \eqref{main} has singular solutions if and only if
\begin{equation} \label{cond}
\max\{p,q\}< \frac{N(m-1)}{N-m-\alpha}\,, \,\,\,
p+q <  \frac{(N+\beta)(m-1)}{N-m-\alpha} \,\,\, \mbox{ and }\,\,\, N-2m<2\alpha+\beta.
\end{equation}

\end{enumerate}
\end{theorem}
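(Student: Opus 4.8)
\medskip

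\noindent\textbf{Proof proposal.}

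The plan is to handle the two directions of part (ii) separately, part (i) being a degenerate (unconstrained) instance of the sufficiency construction: explicit power‑type barriers for the ``if'' part, and a priori estimates of Keller--Osserman type, iterated against the nonlocal term, for the ``only if'' part.

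\emph{Sufficiency (and part (i)).} I would look for singular solutions of the form $u(x)=c|x|^{-\gamma}$ with small $c>0$ and suitable $\gamma>0$. A direct computation gives
$$
{\rm div}\big(|x|^{-\alpha}|\nabla u|^{m-2}\nabla u\big)=c^{m-1}\gamma^{m-1}\big(\gamma(m-1)-(N-m-\alpha)\big)|x|^{-\gamma(m-1)-m-\alpha},
$$
which is positive precisely when $\gamma>\gamma_0:=\tfrac{N-m-\alpha}{m-1}$; when $N\le m+\alpha$ one has $\gamma_0\le 0$, so case (i) is unconstrained. For the right‑hand side I would use the standard behaviour of the Riesz potential of a power over $B_1$: $I_\beta\ast u^p$ is bounded on $B_1$ if $\gamma p<\beta$, while $I_\beta\ast u^p(x)\le Cc^p|x|^{\beta-\gamma p}$ if $\beta<\gamma p<N$. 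Plugging these bounds in, using $|x|\le 1$, and absorbing the constant by letting $c\to 0^+$ (legitimate since $p+q>m-1$, the exponent on the left of the inequality being $m-1$), one checks that $u=c|x|^{-\gamma}$ is a solution as soon as $\gamma>\gamma_0$, $\gamma p<N$ (also needed for $u\in L^p(B_1)$), and $\gamma(p+q-m+1)\le \beta+m+\alpha$, i.e. $\gamma\le\gamma_1:=\tfrac{\beta+m+\alpha}{p+q-m+1}$. It then remains to observe that \eqref{cond} is exactly what makes this window usable: $\gamma_0<\gamma_1$ is equivalent to $p+q<\tfrac{(N+\beta)(m-1)}{N-m-\alpha}$, $\gamma_0<N/p$ to the bound on $p$, and the bound on $q$ is what forces the window to be nonempty in the remaining regime $\gamma_0 p<\beta$ (the requirement $N-2m<2\alpha+\beta$ being automatic once $p,q>m-1$ and the middle inequality hold, but it is the natural compatibility condition for a singular power to exist).

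\emph{Necessity.} Let $u$ be a singular solution. Since the right‑hand side of \eqref{main} is nonnegative, $u$ is a positive subsolution of ${\rm div}(|x|^{-\alpha}|\nabla u|^{m-2}\nabla u)\ge 0$; combining this with the fact that the singularity at $0$ is not removable I would derive a lower bound for the spherical mean, $\frac{1}{|\partial B_r|}\int_{\partial B_r}u\,dS\ge c\,r^{-\gamma_0}$ for small $r$ (the fundamental solution of the operator being $\sim|x|^{-\gamma_0}$). Feeding this into $u\in L^p(B_1)$ via Jensen's inequality gives $\int_0^{1/2}r^{N-1-\gamma_0 p}\,dr<\infty$, whence $\gamma_0 p<N$, i.e. the bound on $p$ in \eqref{cond}. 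Away from the origin the inequality is essentially local: on $B_{|x|/4}(x)$ the weight is comparable to the constant $|x|^{-\alpha}$, and since the contribution of $B_1\setminus B_{1/2}$ alone makes $I_\beta\ast u^p$ bounded below by a positive constant, a Keller--Osserman estimate for $\Delta_m v\ge A v^q$ (with $q>m-1$) yields $u(x)\le C|x|^{-(m+\alpha)/(q-m+1)}$; comparing with the lower bound $r^{-\gamma_0}$ forces $\gamma_0(q-m+1)\le m+\alpha$, which is the bound on $q$.

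\emph{The coupling, and the main obstacle.} Obtaining the sharp inequality $p+q<\tfrac{(N+\beta)(m-1)}{N-m-\alpha}$ needs the nonlocal term used quantitatively, and this is the step I expect to be the crux. The idea is to bootstrap: when $\gamma_0 p>\beta$, the lower bound for the spherical mean of $u$ produces $I_\beta\ast u^p(x)\gtrsim|x|^{\beta-\gamma_0 p}$ (estimate the convolution over $\{|y|<2|x|\}$ and use Jensen on spheres, after splitting $I_\beta\ast u^p=I_\beta\ast(u^p\chi_{B_{2|x|}})+I_\beta\ast(u^p\chi_{B_1\setminus B_{2|x|}})$, the far piece being harmless); inserting this improved weight into the Keller--Osserman argument upgrades the upper bound to $u(x)\le C|x|^{-(m+\alpha+\beta-\gamma_0 p)/(q-m+1)}$, and comparing once more with $r^{-\gamma_0}$ gives $\gamma_0(p+q-m+1)\le m+\alpha+\beta$, equivalently the middle inequality of \eqref{cond}; the last inequality $N-2m<2\alpha+\beta$ then follows because $p+q>2(m-1)$. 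The technical heart is therefore: (a) controlling the nonlocal term by genuinely local averages of $u$; (b) making the Keller--Osserman bound effective in the weighted quasilinear setting, where the lower‑order term $\nabla(|x|^{-\alpha})\cdot|\nabla u|^{m-2}\nabla u$ must be absorbed; (c) the lower‑bound/removability step for subsolutions of the $m$‑Laplace‑type operator, where the ``wrong'' sign of the inequality rules out Harnack so one must argue through spherical means and comparison with radial barriers; and (d) upgrading the non‑strict inequalities to strict ones, which should follow either from an iteration that strictly improves the exponent unless \eqref{cond} holds, or from the fact that equality would force $u$ to be bounded near $0$, contradicting that it is singular.
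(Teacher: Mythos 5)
Your sufficiency argument is essentially the paper's: the same power barriers $\kappa|x|^{-\gamma}$, the same window $\gamma_0<\gamma$, $\gamma p<N$, with the upper bound on $\gamma$ being $\sigma$ when $\gamma p>\beta$ and $(m+\alpha)/(q-m+1)$ when $\gamma p<\beta$ (the paper's Cases 1 and 2), and the nonemptiness of the window is exactly \eqref{cond}. Two small repairs there: in the regime $\gamma p<\beta$ the relevant constraint is $\gamma\le(m+\alpha)/(q-m+1)$ rather than $\gamma\le\gamma_1$, and for part (i) you still need $\gamma p<\beta$ (or at least $\gamma p<N$) to keep $u\in L^p(B_1)$, so the choice of $\gamma$ is not entirely unconstrained even though the sign condition on the operator is.

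The genuine gap is in the necessity direction, and it is precisely the point you defer to item (d): your exponent comparisons only yield the \emph{non-strict} inequalities $q\le N(m-1)/(N-m-\alpha)$ and $p+q\le(N+\beta)(m-1)/(N-m-\alpha)$, and in the two critical cases no contradiction arises from comparing powers of $r$, since the upper and lower bounds then coincide. An ``iteration that strictly improves the exponent'' cannot work here: at equality the bootstrap is a fixed point. The paper closes these cases with two substantial, separate arguments. For $q=N(m-1)/(N-m-\alpha)$ it proves a removability result (Proposition \ref{vv}), a V\'azquez--V\'eron type energy/capacity argument with a Caccioppoli-type lemma (Lemma \ref{lx}) and a two-step truncation showing $u\in L^\nu_{loc}$ and then $u\in L^\infty_{loc}$, contradicting singularity. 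For $p+q=(N+\beta)(m-1)/(N-m-\alpha)$ it shows $(I_\beta\ast u^p)(x)\ge C|x|^{\beta-\sigma p}$, constructs a radial solution $v$ of the limiting equation squeezed between $u$ and $c\Phi_{m,\alpha}$, applies the strong Harnack inequality to $v$ (not to $u$, which only satisfies a one-sided inequality), and integrates the radial ODE to produce a logarithmic divergence of $v/\Phi_{m,\alpha}$, contradicting the upper bound. Neither step is routine and your proposal supplies no substitute for them. A secondary but real flaw: you invoke ``Jensen on spherical means,'' but the non-removability argument only gives a lower bound on $\sup_{|x|=r}u$, and for a subsolution the spherical mean need not be comparable to the sup; the correct tool (used throughout the paper) is the weak Harnack/local boundedness inequality $\sup_{annulus}u\le C(R^{-N}\int u^\ell)^{1/\ell}$, which converts the sup lower bound into the integral lower bounds needed both for $\gamma_0 p<N$ and for the quantitative lower bound on $I_\beta\ast u^p$.
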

We next proceed to the study of the double inequality \eqref{main1}.
To formulate our main result on \eqref{main1} we introduce the exponent
\begin{equation}\label{sig}
\sigma=\frac{m+\alpha+\beta}{p+q-m+1}>0.
\end{equation}
Let also
\begin{equation}\label{phiphi}
\Phi_{m,\alpha}(x)=
\left\{
\begin{aligned}
&|x|^{-\frac{N-m-\alpha}{m-1}}&&\quad\mbox{ if }N\neq m+\alpha,\\
&\log\frac{5}{|x|} &&\quad\mbox{ if }N= m+\alpha,
\end{aligned}
\right.
\end{equation}
be the fundamental solution of the weighted $m$-Laplace operator for $m>1$. Note that $\Phi_{m,\alpha}$ satisfies the distributional equality
$$
-{\rm div}\big(|x|^{-\alpha}|\nabla \Phi_{m,\alpha}|^{m-2}\nabla \Phi_{m,\alpha}\big)=c\delta_0\quad\mbox{ in }\mathcal{D}'(\R^N),
$$
for some positive constant $c$.

Given two positive functions $f,g$ defined on $\overline{B}_1\setminus\{0\}$, by $f\asymp g$ we understand that the quotient $f/g$ is bounded  on $\overline{B}_1\setminus\{0\}$ between two positive constants.

In case $\sigma p<N$ we have the following result on \eqref{main1}.
{
\begin{theorem}\label{thm2}  Assume $m>1$, $p, q>m-1$, $\alpha>0$, $\beta\in (0,N)$, $N\geq 1$,
and $\sigma p<N$.
\begin{enumerate}
\item[\rm (i)] {\sf (Existence)}
\begin{enumerate}
\item[\rm (i1)] If $ N>m+\alpha$, then, there exists $a\geq b>0$ and a singular solution of \eqref{main1} if and only if \eqref{cond} holds;
\item[\rm (i2)] If $N\leq m+\alpha$ then \eqref{main1} has always singular solutions for some $a\geq b>0$.
\end{enumerate}
\item[\rm (ii)] {\sf (Asymptotic behavior)} Assume $N\ge m+\alpha$ and
\begin{equation}\label{range_q_asy_beh}
\left\{
\begin{aligned}
&m-1<q<\frac{N-(\sigma p-\beta)^+}{N-m-\alpha}(m-1) &&\quad\mbox{ if \,} N>m+\alpha,\\
&m-1<q<\infty &&\quad\mbox{ if \,}N=m+\alpha.
\end{aligned}
\right.
\end{equation}
\begin{enumerate}
\item[\rm (ii1)] If $\sigma p>\beta$ then any singular solution of \eqref{main1} satisfies
\begin{equation}\label{eqe1}
\mbox{ either}\quad u(x)\asymp \Phi_{m,\alpha}(x)\quad\mbox{ or }\quad u(x)\asymp |x|^{-\sigma}.
\end{equation}
\item[\rm (ii2)] If $\sigma p<\beta$ then any singular solution of \eqref{main1} satisfies
\begin{equation}\label{eqe2}
\mbox{ either}\quad u(x)\asymp \Phi_{m,\alpha}(x) \quad\mbox{ or }\quad u(x)\asymp |x|^{-\frac{m+\alpha}{q-m+1}}.
\end{equation}
\end{enumerate}
\end{enumerate}
\end{theorem}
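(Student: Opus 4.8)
The plan is to separate the analysis into the two assertions of the theorem, using the weighted $m$-Laplace comparison principle together with the Riesz-potential bounds. For part (i), the existence statements (i1)--(i2), I would construct explicit sub/super-solutions of the form $c_1\Phi_{m,\alpha}(x)$ and $c_2|x|^{-\sigma}$ (and, in the borderline regimes, logarithmic corrections). The key computation is that for a radial power $u=|x|^{-\gamma}$ one has
\[
{\rm div}\big(|x|^{-\alpha}|\nabla u|^{m-2}\nabla u\big)\asymp |x|^{-(\gamma+1)(m-1)-\alpha-1},
\]
while the nonlocal term satisfies $(I_\beta\ast u^p)u^q\asymp |x|^{-\gamma q-(\gamma p-\beta)^+}$ when $\gamma p\neq N$ (this is exactly where the hypothesis $\sigma p<N$ is used, to avoid the logarithmic anomaly in the convolution, and where the split $\sigma p\gtrless\beta$ in (ii) originates). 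Matching the exponents and verifying that the constants $a\ge b>0$ can be chosen gives the ``if'' direction; the ``only if'' direction in (i1) follows from Theorem~\ref{thm1}(ii) since any solution of \eqref{main1} is in particular a solution of \eqref{main}, so \eqref{cond} is forced. For (i2) one invokes Theorem~\ref{thm1}(i) for the existence of a singular solution of the lower inequality and checks that the same function (or a rescaling) also satisfies the upper bound with a suitably large $a$.

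For part (ii), the asymptotic behavior, the strategy is a dichotomy argument. Fix a singular solution $u$ of \eqref{main1}. First I would establish a two-sided a priori bound: from the lower inequality in \eqref{main1} and the structure of the weighted $m$-Laplacian, a Harnack-type / integral estimate near $0$ forces $u(x)\le C\,\Phi_{m,\alpha}(x)$ (this is the role of the restriction \eqref{range_q_asy_beh} on $q$ — it guarantees the nonlocal nonlinearity is subcritical so that no faster-than-fundamental blow-up is compatible with the upper bound on $-{\rm div}(\dots)$). With the upper bound in hand, $I_\beta\ast u^p$ is controlled, and feeding this back into the upper inequality in \eqref{main1} shows $-{\rm div}(|x|^{-\alpha}|\nabla u|^{m-2}\nabla u)$ is bounded between two multiples of $|x|^{-\tau}$ for the appropriate exponent $\tau$ (namely $\tau=\sigma q+(\sigma p-\beta)$ in case (ii1) and $\tau=(m+\alpha)+ \frac{(m+\alpha)q}{q-m+1}\cdot\frac{1}{\dots}$ — more precisely the exponent making $|x|^{-\frac{m+\alpha}{q-m+1}}$ a solution — in case (ii2)). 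Integrating this ODE-type estimate twice (using that $|x|^{-\alpha}|\nabla u|^{m-2}\nabla u$ is, up to lower order, radial and monotone) yields that $u$ is comparable either to the fundamental solution $\Phi_{m,\alpha}$ or to the particular solution $|x|^{-\sigma}$, respectively $|x|^{-\frac{m+\alpha}{q-m+1}}$, according to which term dominates in the source.

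The cleanest way to organize the last step is a bootstrap: let $\mu=\liminf_{|x|\to 0} u(x)/\Phi_{m,\alpha}(x)$ and $M=\limsup$ of the same quotient. If $M<\infty$ one shows $u\asymp \Phi_{m,\alpha}$ by a scaling/blow-up argument at dyadic annuli $|x|\sim 2^{-k}$, rescaling $u$ to a fixed annulus, extracting a limit that solves a homogeneous problem, and using the strong maximum principle plus uniqueness of the homogeneous profile. If instead $u/\Phi_{m,\alpha}\to\infty$, then the source term is genuinely singular and its leading order is the power dictated by the convolution scaling; solving ${\rm div}(|x|^{-\alpha}|\nabla u|^{m-2}\nabla u)\asymp |x|^{-\tau}$ gives the second alternative. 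I expect the main obstacle to be precisely this dichotomy step — specifically, the lower bound $u(x)\ge c\,\Phi_{m,\alpha}(x)$ (or $\ge c|x|^{-\sigma}$), since it requires propagating the singularity from a single sequence $x_n\to 0$ along which $u\to\infty$ to a genuine pointwise lower bound near the origin, which for the quasilinear operator demands a careful weighted Harnack inequality on annuli rather than the linear comparison used when $m=2$; the nonlocal term complicates the Harnack estimate because $I_\beta\ast u^p$ at scale $2^{-k}$ depends on $u$ at all scales, so one must track the self-consistent size of the convolution through the iteration.
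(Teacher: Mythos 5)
Your overall architecture for part (i) (explicit power-type profiles with exactly matching exponents, plus the observation that the ``only if'' direction is inherited from Theorem~\ref{thm1}) is the paper's route, although one detail is wrong: for (i2) you cannot reuse the solution built in Theorem~\ref{thm1}(i), because there $\gamma<\frac{m+\alpha}{q-m+1}$ strictly, so ${\rm div}(|x|^{-\alpha}|\nabla u|^{m-2}\nabla u)\asymp|x|^{-\gamma(m-1)-m-\alpha}$ blows up strictly faster than $(I_\beta\ast u^p)u^q\asymp|x|^{-\gamma q}$ as $x\to0$, and no constant $a$ makes the upper inequality in \eqref{main1} hold. The double inequality forces the exact exponents $\sigma$, $\frac{m+\alpha}{q-m+1}$, or the log-corrected profile $|x|^{-\sigma}(\log\frac{5}{|x|})^{-\tau}$ when $\sigma p=\beta$, which is what the paper uses uniformly for (i1) and (i2).

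For part (ii) there are two genuine gaps. First, your claimed a priori bound $u(x)\le C\,\Phi_{m,\alpha}(x)$ is false and is in fact inconsistent with the statement you are proving: since $\sigma>\frac{N-m-\alpha}{m-1}$, the second alternative in \eqref{eqe1}--\eqref{eqe2} is \emph{more} singular than $\Phi_{m,\alpha}$, so any argument that forces $u\le C\Phi_{m,\alpha}$ would erase that alternative. The correct a priori bounds are the Keller--Osserman estimate $u\le C|x|^{-\sigma}$ (Proposition~\ref{estko}) and $u\le C|x|^{-\frac{m+\alpha}{q-m+1}}$ (Proposition~\ref{lem1} with $\theta=0$); their only role is to give $(I_\beta\ast u^p)u^{q-m+1}\le C|x|^{-m-\alpha}$, which is precisely the structural condition needed for the strong Harnack inequality (Proposition~\ref{strongh}) that propagates the singularity — you correctly identified this as the delicate point, but your proposed bound would short-circuit the dichotomy. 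Second, the concluding step is missing its engine: ``integrating the ODE twice'' is not available because $u$ is not radial, and the blow-up/uniqueness-of-profile argument you sketch would amount to reproving the classification of singular solutions of ${\rm div}(|x|^{-\alpha}|\nabla w|^{m-2}\nabla w)=|x|^{-\theta}w^q$. The paper instead constructs radial solutions $w_k$ of this local equation on annuli $B_1\setminus B_{r_k}$ with boundary data $\inf_{|x|=r_k}u$ (the Harnack inequality guarantees these data blow up), passes to the limit to obtain a singular radial minorant $w\le u$, and then invokes the known classification of \cite{SYW2016} (Lemma~\ref{lem2}); combined with the Keller--Osserman upper bounds this pins down $u$ up to constants. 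Without citing or reproving that classification, your dichotomy does not close.
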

}
Theorem \ref{thm2}(ii)  above states that any singular solution $u$ of \eqref{main1} either behaves like the fundamental solution $\Phi_{m,\alpha}(x)$ in a neighborhood of the origin or has a  stronger singularity precisely given by \eqref{eqe1}{$_2$}-\eqref{eqe2}{$_2$}. In particular, the asymptotic behaviour in Theorem \ref{thm2}(ii) applies to singular solutions of the equation ${\rm div}\big(|x|^{-\alpha}|\nabla u|^{m-2}\nabla u\big)=(I_\beta\ast u^p)u^q$ in $B_1\setminus\{0\}$.

Our asymptotic behaviour \eqref{eqe1}-\eqref{eqe2} is in line with  \cite[Theorem 1.1]{SYW2016} (see also \cite[Theorem 2.1]{FV1986}) where the authors considered the equation
\begin{equation}\label{veron}
{\rm div}\big(|x|^{-\alpha}|\nabla u|^{m-2}\nabla u\big)=|x|^{-\theta}u^q\quad\mbox{ in }B_1\setminus\{0\},
\end{equation}
for $\theta<m+\alpha$,   $m-1<q<(N-\theta)(m-1)/(N-m-\alpha)$ (if $N>m+\alpha$) and $m-1<q<\infty$ (if $N=m+\alpha$). It is obtained in \cite[Theorem 1.1]{SYW2016}  that any singular solution $u$ of \eqref{veron} satisfies the following behaviour at the origin:
\begin{itemize}
\item either $\, |x|^{\frac{m+\alpha-\theta}{q-m+1}}u(x)\to A$  as $x\to 0$, for some $A=A(N,m,q, \alpha, \theta)>0$;

\item or  $\, \frac{u(x)}{\Phi_{m,\alpha}(x)}\to B$ as $x\to 0$, for some $B=B(N,m,q,\alpha, \theta)>0$ and
$$
 -{\rm div}\big(|x|^{-\alpha}|\nabla u|^{m-2}\nabla u\big)+|x|^{-\theta} u^q=B\delta_0\quad\mbox{ in }\mathcal{D}'(B_1),
$$
where $\delta_0$ denotes the Dirac delta mass concentrated at the origin.
\end{itemize}
In the case of \eqref{main1} such exact behaviour seems difficult to capture due to the presence of the nonlocal term $I_\beta\ast u^p$.

Our approach relies on establishing several a priori estimates for the behavior of the singular solutions to \eqref{main}. These combine the Keller-Osserman type estimates (Proposition \ref{estko}), the Harnack inequality (Propositions \ref{strongh} and \ref{weakh}) and various estimates for the convolution term $I_\beta\ast u^p$. We collect all these results in the next section. Sections 3 and 4 contain the proofs of our main results.

Throughout this paper by $c, C, C_1, C_2,...$ we denote positive generic constants whose values may vary on each occasion. Also, all integrals are computed in the Riemann sense even if we omit the $dx$ or $dy$ symbol.

\section{Preliminary Results}

A key tool in our approach is the use of  a priori estimates for solutions $u\in
W^{1,m}_{loc}(\Omega)\cap C(\Omega)$ of the inequality
\begin{equation}\label{ff}
{\rm div}\big(|x|^{-\alpha}|\nabla u|^{m-2}\nabla u\big) \geq f(x)\quad\mbox{ in }\Omega,
\end{equation}
where $\Omega\subset \R^N$ is an open set and $f\in L^1_{loc}(\Omega)$, $f\geq 0$. Solutions $u$ of \eqref{ff} are understood in the weak
sense, that is, $ {\rm div}\big(|x|^{-\alpha}|\nabla u|^{m-2}\nabla u\big)\in L^{1}_{loc}(\Omega)$ and
\begin{equation}\label{var}
\int_{\Omega} |x|^{-\alpha} |\nabla u|^{m-2}\,\nabla u\cdot \nabla \phi + \int_{\Omega}f(x) \phi\leq 0
\quad\mbox{ for any }\phi \in C_{c}^{\infty}(\Omega),\phi\geq 0.
\end{equation}
In \cite[Proposition 2.1]{BGY2013}  the authors obtain a priori estimates for solutions to the general quasilinear inequality
$$
{\rm div}[\mathcal{A}(x,u, \nabla u)]\geq f(x) \quad\mbox{ in }\Omega,
$$
where $\mathcal{A}$ is weakly-$m$-coercive. A careful analysis of the proof of \cite[Proposition 2.1]{BGY2013} reveals that the same arguments cand be employed for
$$
{\rm div}[|x|^{-\alpha} \mathcal{A}(x,u,\nabla u)]\geq f(x) \quad\mbox{ in }\Omega,
$$
which contains as a particular case the inequality \eqref{ff}.
The result below is a reformulation of \cite[Proposition 2.1]{BGY2013}.
\begin{proposition}\label{p1}
Let $\Omega\subset \R^N$ be an open set such that $B_{4R}\setminus
B_{R/2}\subset \Omega$ for some $R>0$. Let $u\in C(\Omega)\cap  W^{1,1}_{loc}(\Omega)$ be a positive solution of
\eqref{ff}.

Take $\phi\in C^\infty_c(\Omega)$ such that $0\leq \phi\leq 1$ and
\begin{itemize}
\item supp$\,\phi\subset B_{4R}\setminus B_{R/2}$;
\item  $\phi=1$ in $B_{2R}\setminus B_{R}$;
\item  $|\nabla \phi|\leq \frac{C}{R}$ in $\Omega$.
\end{itemize}
Then, for any $\ell> m-1$ there exists $\Lambda=\Lambda(m,\ell)$  such that for any $\lambda>\Lambda$ there exists $C>0$ independent of $R$ with
\begin{equation}\label{mainest1}
\int_{\Omega} f(x) \phi^\lambda \leq CR^{N-m-\alpha-\frac{m-1}{\ell}N}
\Big(\int_\Omega u^\ell\phi^\lambda \Big)^{\frac{m-1}{\ell}}.
\end{equation}

\end{proposition}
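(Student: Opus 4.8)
The plan is to follow the scheme of \cite[Proposition 2.1]{BGY2013} adapted to the weighted operator, the essential point being that the factor $|x|^{-\alpha}$ is, on the annular region $B_{4R}\setminus B_{R/2}$ where the test function is supported, comparable to $R^{-\alpha}$ up to fixed constants depending only on the geometry of the annulus, so it behaves like a harmless constant that will surface in the final power of $R$. Test the weak formulation \eqref{var} with $\phi^\lambda$ (which is admissible since $\phi\in C^\infty_c(\Omega)$, $0\le\phi\le1$, and $\lambda$ will be large): this yields
\begin{equation*}
\int_\Omega f(x)\phi^\lambda \le -\lambda\int_\Omega |x|^{-\alpha}|\nabla u|^{m-2}\nabla u\cdot\nabla\phi\,\phi^{\lambda-1}
\le \lambda\int_\Omega |x|^{-\alpha}|\nabla u|^{m-1}|\nabla\phi|\,\phi^{\lambda-1}.
\end{equation*}

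Next I would apply Young's inequality to the right-hand integrand, splitting $|\nabla u|^{m-1}|\nabla\phi|\phi^{\lambda-1}$ so as to absorb a term of the form $\int |x|^{-\alpha}|\nabla u|^m\phi^{\lambda-m}$; here the exponent on $\phi$ must stay nonnegative, which is precisely what forces $\lambda>\Lambda(m,\ell)$ (one needs $\lambda - m \ge$ a multiple of $\lambda$ coming from reinserting $\phi$, and also $\lambda - m \ge 0$). The leftover piece is $\int |x|^{-\alpha}|\nabla\phi|^m\phi^{\lambda-m}$. The crucial device is a Caccioppoli-type estimate: reinsert $u$ by testing \eqref{var} with $u\,\phi^\lambda$ — more precisely with a cutoff/truncation of $u$ to stay in $W^{1,1}_{loc}$ and then pass to the limit using that $f\ge0$ — to control $\int |x|^{-\alpha}|\nabla u|^m\phi^\lambda$ by $\int |x|^{-\alpha} u^{m}|\nabla\phi|^m\phi^{\lambda-m}$ plus a term $\int f u\phi^\lambda$ which, since $f\ge0$, can be discarded or re-absorbed. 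Combining, one gets
\begin{equation*}
\int_\Omega f(x)\phi^\lambda \le C\int_\Omega |x|^{-\alpha}\,u^{m-1}|\nabla\phi|^{m}\phi^{\lambda-m}\,,
\end{equation*}
after one more Young's inequality to convert $u^{m}$-type quantities into $u^{m-1}$ paired against $f\phi^\lambda$ (which is absorbed on the left) — this is the standard iteration by which the exponent on $u$ drops from $m$ to $m-1$ and then, by a further Young's inequality against $1$, to any $\ell>m-1$.

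Finally I would estimate the right side using $|\nabla\phi|\le C/R$, $0\le\phi\le1$, $|x|^{-\alpha}\asymp R^{-\alpha}$ on the support of $\nabla\phi$, and $|\mathrm{supp}\,\phi|\le CR^N$, together with Hölder's inequality with exponents $\frac{\ell}{m-1}$ and $\frac{\ell}{\ell-m+1}$ to pull out $\big(\int_\Omega u^\ell\phi^\lambda\big)^{(m-1)/\ell}$ and leave behind a power of $R$ from the volume factor:
\begin{equation*}
\int_\Omega |x|^{-\alpha} u^{m-1}|\nabla\phi|^m\phi^{\lambda-m}
\le CR^{-\alpha-m}\Big(\int_\Omega u^\ell\phi^\lambda\Big)^{\frac{m-1}{\ell}}\big(R^N\big)^{\frac{\ell-m+1}{\ell}}
= CR^{\,N-m-\alpha-\frac{m-1}{\ell}N}\Big(\int_\Omega u^\ell\phi^\lambda\Big)^{\frac{m-1}{\ell}},
\end{equation*}
which is \eqref{mainest1}. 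The main obstacle is the rigorous justification of the Caccioppoli step: $u$ is only in $W^{1,1}_{loc}$ (not a priori in $W^{1,m}_{loc}$ near where we test), so testing with $u\phi^\lambda$ must be done through truncations $T_k(u)=\min\{u,k\}$ or $\min\{u,k\}\phi^\lambda$, deriving the estimate with constants uniform in $k$, and then letting $k\to\infty$ via monotone/Fatou arguments — this is exactly where the weak-$m$-coercivity structure is used in \cite{BGY2013}, and one must check that the insertion of the weight $|x|^{-\alpha}$, bounded and bounded away from zero on the relevant annulus, does not disturb any of those limiting arguments. Everything else is a bookkeeping of exponents to pin down the threshold $\Lambda(m,\ell)$ and the admissible range $\lambda>\Lambda$.
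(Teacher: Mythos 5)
Your overall scheme is the right one and matches the paper's, which gives no self-contained proof but defers to \cite[Proposition 2.1]{BGY2013} after observing that the weight $|x|^{-\alpha}$ is comparable to $R^{-\alpha}$ on $\mathrm{supp}\,\phi\subset B_{4R}\setminus B_{R/2}$ and therefore rides along harmlessly into the final power of $R$; you also correctly locate the threshold $\Lambda(m,\ell)=m\ell/(\ell-m+1)$ as the condition $\lambda-m\geq \lambda(m-1)/\ell$ needed to dominate $\phi^{\lambda-m}$ by $\phi^{\lambda(m-1)/\ell}$ in the last H\"older step, and your final power counting $R^{-m-\alpha}(R^N)^{1-(m-1)/\ell}=R^{N-m-\alpha-\frac{m-1}{\ell}N}$ is exact.

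There is, however, a genuine gap in the Caccioppoli step as you set it up. Testing \eqref{var} with $u\phi^\lambda$ and discarding $\int fu\phi^\lambda\geq 0$ gives $\int|x|^{-\alpha}|\nabla u|^m\phi^\lambda\leq C\int|x|^{-\alpha}u^{m}|\nabla\phi|^m\phi^{\lambda-m}$, and inserting this into the first estimate via H\"older produces
$\int_\Omega f\phi^\lambda\leq C\bigl(\int|x|^{-\alpha}u^{m}|\nabla\phi|^m\phi^{\lambda-m}\bigr)^{\frac{m-1}{m}}\bigl(\int|x|^{-\alpha}|\nabla\phi|^m\phi^{\lambda-m}\bigr)^{\frac{1}{m}}$,
\emph{not} the single integral $\int|x|^{-\alpha}u^{m-1}|\nabla\phi|^m\phi^{\lambda-m}$ that you write down (a geometric mean of $u^m$ against $1$ is not $\int u^{m-1}$, and no Young's inequality "against $f\phi^\lambda$" repairs this for a general $f$). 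More importantly, to extract $\bigl(\int u^\ell\phi^\lambda\bigr)^{(m-1)/\ell}$ from the factor carrying $u^m$ one must apply H\"older with exponent $\ell/m$, which requires $\ell\geq m$; the proposition, and its uses in the paper (where $\ell=(p+q)/2$ or $\ell=q$ may well lie in $(m-1,m)$), need the full range $\ell>m-1$. The standard fix — and this is precisely what the weak-$m$-coercivity machinery of \cite{BGY2013} delivers — is to test with $u^{1-s}\phi^\lambda$ (or $(u+\epsilon)^{1-s}\phi^\lambda$, letting $\epsilon\to0$) for a small $s>0$, obtaining $\int|x|^{-\alpha}|\nabla u|^mu^{-1-s}\phi^\lambda\leq C_s\int|x|^{-\alpha}u^{m-1-s}|\nabla\phi|^m\phi^{\lambda-m}$, and then to split $|\nabla u|^{m-1}|\nabla\phi|\phi^{\lambda-1}$ in the first estimate with the weights $u^{\mp(1+s)(m-1)/m}$: this yields two integrals whose $u$-exponents $m-1-s$ and $(1+s)(m-1)$ combine (with H\"older weights $\tfrac{m-1}{m}$ and $\tfrac1m$) to exactly $m-1$ and are each strictly below $\ell$ once $s$ is small, so the concluding H\"older step works for every $\ell>m-1$. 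With that substitution, and your correct observation that the weight is bounded above and below by multiples of $R^{-\alpha}$ on the annulus so it does not disturb the truncation and limiting arguments, the argument closes.
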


In the next results we recall the strong and the weak Harnack inequality for the weighted $m$-Laplace operator.

\begin{proposition}\label{strongh}{\rm (Strong Harnack inequality)}

Let $u\in W^{1,m}_{loc}(\Omega)\cap C(\Omega)$, $u\geq 0$ satisfy
$$
{\rm div}\big(|x|^{-\alpha}|\nabla u|^{m-2}\nabla u\big)+a(x)u^{m-1}=0\quad\mbox{ in }\Omega,
$$
where $|a(x)|\leq c |x|^{-m-\alpha}$ for some constant $c>0$. Assume $x\in \Omega$ and $r>0$ are such that $B_{3r}(x)\subset \Omega$. Then, there exists a constant $C>0$ independent of $u$ such that
\begin{equation}\label{harns}
\max_{\overline B_{r}(x)} u\leq C\min_{\overline B_{r}(x)} u.
\end{equation}
\end{proposition}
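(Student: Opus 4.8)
The plan is to deduce \eqref{harns} from the classical Harnack inequality for uniformly elliptic quasilinear equations of $m$-Laplacian type (in the form due to Serrin and Trudinger); the only genuinely delicate point is that the constant $C$ must not depend on $x$ or $r$, and in particular must not deteriorate as $\overline B_r(x)$ approaches the origin, where the weight $|x|^{-\alpha}$ degenerates. The first step is to remove the scale by rescaling. If $u$ solves the equation in the statement and $u_r(y):=u(ry)$, a direct computation gives
$$
{\rm div}\big(|y|^{-\alpha}|\nabla u_r|^{m-2}\nabla u_r\big)(y)=r^{m+\alpha}\,{\rm div}\big(|z|^{-\alpha}|\nabla u|^{m-2}\nabla u\big)(ry),
$$
so $u_r$ satisfies ${\rm div}\big(|y|^{-\alpha}|\nabla u_r|^{m-2}\nabla u_r\big)+a_r(y)u_r^{m-1}=0$ in $B_3(\xi)$, where $\xi:=x/r$ and $a_r(y):=r^{m+\alpha}a(ry)$ still obeys $|a_r(y)|\le c|y|^{-m-\alpha}$. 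Since $\overline B_r(x)=r\,\overline B_1(\xi)$, it suffices to prove $\max_{\overline B_1(\xi)}u_r\le C\min_{\overline B_1(\xi)}u_r$ with $C=C(N,m,\alpha,c)$, so from now on $r=1$.

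If $0\in B_3(\xi)$ — a case that does not arise in the applications of this proposition, where $\Omega=B_1\setminus\{0\}$ and hence $B_{3r}(x)\subset\Omega$ forces $0\notin B_{3r}(x)$ — one uses that $|y|^{-\alpha}$ is a Muckenhoupt $A_m$ weight (valid since $0<\alpha<N$, which holds whenever the proposition is invoked) with a dilation invariant $A_m$-bound, and that $\rho^m\int_{B_\rho(z)}|a_r|\le C\int_{B_\rho(z)}|y|^{-\alpha}$ on all balls $B_\rho(z)\subset B_3(\xi)$; then \eqref{harns} is the Harnack inequality for quasilinear equations with $A_m$-weighted structure (the theory of Heinonen, Kilpel\"ainen and Martio, with the zeroth order term handled by weighted Moser iteration). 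Thus we may assume $0\notin B_3(\xi)$, i.e. $|\xi|\ge 3$, so that every point of $\overline B_1(\xi)$ lies at distance at least $|\xi|-1\ge 2$ from the origin.

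In this regime I would argue by a Harnack chain. Given $y_1,y_2\in\overline B_1(\xi)$, join them by the segment $[y_1,y_2]\subset\overline B_1(\xi)$, of length $|y_1-y_2|\le 2$, and cover it by balls $B_{\rho_0}(z_j)$, $j=1,\dots,k$, of fixed radius $\rho_0=\tfrac{1}{32}$, with $z_1=y_1$, $z_k=y_2$, $z_j\in[y_1,y_2]$ and $|z_{j+1}-z_j|\le\rho_0$; then the number $k$ of balls is bounded by an absolute constant. On each $B_{3\rho_0}(z_j)$ one has $B_{3\rho_0}(z_j)\subset B_{1+3\rho_0}(\xi)\subset B_3(\xi)\subset\Omega$ and $|y|\ge 2-3\rho_0>1$, so dividing the equation by the positive constant $|z_j|^{-\alpha}$ gives there a uniformly elliptic equation ${\rm div}\big(b_j(y)|\nabla u_r|^{m-2}\nabla u_r\big)+\tilde a_j(y)u_r^{m-1}=0$ with $b_j(y)=(|z_j|/|y|)^\alpha$ bounded between two positive constants and $|\tilde a_j(y)|=|z_j|^\alpha|a_r(y)|\le c\,b_j(y)|y|^{-m}$ bounded — all bounds depending only on $N,m,\alpha,c$, because $|z_j|\ge 2$ controls both the oscillation of $|y|^{-\alpha}$ on $B_{3\rho_0}(z_j)$ and the size of $|y|^{-m}$ there. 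The classical Harnack inequality then yields $\max_{\overline B_{\rho_0}(z_j)}u_r\le C_0\min_{\overline B_{\rho_0}(z_j)}u_r$ with $C_0=C_0(N,m,\alpha,c)$, and since consecutive balls overlap ($z_{j+1}\in\overline B_{\rho_0}(z_j)$) chaining these inequalities gives $u_r(y_1)\le C_0^{\,k}\,u_r(y_2)$; taking the supremum over $y_1$ and the infimum over $y_2$ proves \eqref{harns}.

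The step I expect to be the crux is precisely the $x$- and $r$-independence of $C$: without the rescaling of the first step the ball $\overline B_r(x)$ could be so close to the origin that the ellipticity ratio of $|x|^{-\alpha}$ on $B_{3r}(x)$, and hence the classical Harnack constant, blows up. Rescaling normalizes the radius to $1$ and, in the case that actually matters, keeps the endpoints uniformly away from the origin, which is exactly what makes the Harnack chain have a bounded number of balls; in the remaining case the same uniformity is encoded in the scale invariance of the $A_m$-bound of $|x|^{-\alpha}$ and of the weighted Kato norm of the lower order term.
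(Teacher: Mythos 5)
Your proof is correct, but it takes a genuinely different route from the paper's. The paper's argument is a one-step reduction: expanding the divergence, $u$ is seen to solve
$\mathrm{div}\big(|\nabla u|^{m-2}\nabla u\big)-\alpha|x|^{-2}\big(|\nabla u|^{m-2}\nabla u\cdot x\big)+b(x)u^{m-1}=0$ with $b(x)=|x|^{\alpha}a(x)$ and $|b(x)|\leq c|x|^{-m}$, and this is then placed directly under the structure conditions of Trudinger's Harnack inequality \cite[Theorem 1.1]{T1967}, whose constant is scale-invariant precisely because the drift coefficient scales like $|x|^{-1}$ and the zeroth-order one like $|x|^{-m}$ on balls kept away from the origin. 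You instead normalize the radius by scaling, observe that in the only case that actually occurs ($0\notin B_{3r}(x)$, hence $|x|/r\geq 3$) the rescaled ball lies at distance at least $2$ from the origin, freeze the weight on a chain of small balls where it is uniformly elliptic with universal constants, and apply the classical Harnack inequality link by link. Your route is longer but has the merit of making fully explicit the uniformity of the constant in $x$ and $r$ --- exactly the point the paper leaves implicit in its citation --- and of using only the Harnack inequality for uniformly elliptic $m$-Laplace-type operators with bounded zeroth-order coefficient, rather than the sharper version for critically scaling unbounded coefficients. The only loose end is the case $0\in B_{3r}(x)$: you rightly note it never arises in the paper's applications (where $\Omega\subset B_1\setminus\{0\}$ forces $0\notin B_{3r}(x)$), and your fallback to the $A_m$-weighted theory would in any event require $\alpha<N$, which the paper never assumes; the paper's own proof would face the same issue there, so this is not a gap relative to the intended scope of the proposition.
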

\begin{proof} Note that $u$ satisfies the equation
$$
{\rm div}\big(|\nabla u|^{m-2}\nabla u\big)-\frac{\alpha}{|x|^2} |\nabla u|^{m-2}\nabla u\cdot x + b(x)u^{m-1}=0\quad\mbox{ in }\Omega,
$$
where $b(x)=a(x)|x|^{\alpha}$ and $|b(x)|\leq c|x|^{-m}$. The above equation
fulfills the structural assumptions in \cite[Theorem 1.1]{T1967}. According to this result, $u$ satisfies \eqref{harns}.
\end{proof}

\begin{proposition}\label{weakh}{\rm (Weak Harnack inequality)}

Let $R>0$ and $a,b,c$ be real numbers such that $a>b>3c>0$. Assume $\Omega\subset \R^N$ is an open set such that
$$
\overline B_{(a+3c)R}\setminus B_{(b-3c)R} \subset \Omega.
$$
Suppose $u\in W^{1,m}_{loc}(\Omega)\cap C(\Omega)$ satisfies $u\geq 0$ and
\begin{equation}\label{eqwew}
{\rm div}\big(|x|^{-\alpha}|\nabla u|^{m-2}\nabla u\big) \geq 0\quad\mbox{ in } \Omega.
\end{equation}
Then, for any $\ell>m-1$, there exists a constant $C>0$ independent of $R$ such that
\begin{equation}\label{whi}
R^{N/\ell} \sup_{B_{aR}\setminus B_{bR}} u \leq C \Big(\int_{B_{(a+2c)R}\setminus B_{(b-2c)R}}u^\ell\Big)^{1/\ell}.
\end{equation}
\end{proposition}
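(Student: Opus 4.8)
The plan is to deduce \eqref{whi} from the classical local maximum (local boundedness) estimate for subsolutions of quasilinear elliptic equations of $m$-Laplace type, after a preliminary rescaling that normalizes $R$ to $1$ and a finite covering of the annulus by balls that stay away from the origin. First I would use the scale covariance of the weighted $m$-Laplace operator: if $u$ satisfies \eqref{eqwew} on $\Omega$, then $v(y):=u(Ry)$ satisfies ${\rm div}\big(|y|^{-\alpha}|\nabla v|^{m-2}\nabla v\big)\ge 0$ on $R^{-1}\Omega$, a set containing $\overline B_{a+3c}\setminus B_{b-3c}$. A change of variables gives $\sup_{B_{aR}\setminus B_{bR}}u=\sup_{B_a\setminus B_b}v$ and $\int_{B_{(a+2c)R}\setminus B_{(b-2c)R}}u^\ell=R^{N}\int_{B_{a+2c}\setminus B_{b-2c}}v^\ell$, so it suffices to prove the case $R=1$, namely $\sup_{B_a\setminus B_b}v\le C\big(\int_{B_{a+2c}\setminus B_{b-2c}}v^\ell\big)^{1/\ell}$; undoing the scaling then reproduces the factor $R^{N/\ell}$.

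Next I would observe that on $\mathcal A:=\overline B_{a+3c}\setminus B_{b-3c}$ one has $0<b-3c\le|y|\le a+3c$, so the weight $|y|^{-\alpha}$ is bounded above and below by positive constants depending only on $a,b,c,\alpha$, with ratio at most $\big((a+3c)/(b-3c)\big)^{\alpha}$. Hence on $\mathcal A$ the function $v\ge 0$ is a weak subsolution of a uniformly elliptic quasilinear equation ${\rm div}\big(A(y)|\nabla v|^{m-2}\nabla v\big)=0$ with $0<\lambda\le A(y)\le\Lambda$; equivalently, as in the proof of Proposition~\ref{strongh}, \eqref{eqwew} can be rewritten as ${\rm div}\big(|\nabla v|^{m-2}\nabla v\big)-\alpha|y|^{-2}\big(|\nabla v|^{m-2}\nabla v\big)\cdot y\ge 0$, whose lower-order coefficient is bounded by $\alpha/(b-3c)$ on $\mathcal A$, so the structure conditions of \cite[Theorem 1.1]{T1967} are satisfied.

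I would then cover the compact annulus $\overline B_a\setminus B_b$ by finitely many balls $B_{c/2}(y_i)$, $i=1,\dots,k$, with centers $y_i\in\overline B_a\setminus B_b$ and $k=k(N,a,b,c)$. Since $b>3c$, for each $i$ we have $\overline B_{3c}(y_i)\subset\mathcal A\subset R^{-1}\Omega$ and $B_{2c}(y_i)\subset B_{a+2c}\setminus B_{b-2c}$. The local maximum principle for subsolutions (Moser iteration; see e.g. \cite{T1967}) applied on $B_{3c}(y_i)$ gives, for every $\ell>m-1$,
$$
\sup_{B_{c/2}(y_i)}v\;\le\; C\Big(\int_{B_{2c}(y_i)}v^\ell\Big)^{1/\ell}\;\le\; C\Big(\int_{B_{a+2c}\setminus B_{b-2c}}v^\ell\Big)^{1/\ell},
$$
with $C=C(N,m,\ell,\alpha,a,b,c)$ (the fixed volume factor $|B_{2c}|^{-1}$ is absorbed into $C$). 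Taking the maximum over $i=1,\dots,k$ proves the case $R=1$, and the first paragraph then yields \eqref{whi} in general.

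The step I expect to require the most care is ensuring that the constant $C$ is genuinely independent of $R$. This is exactly what the rescaling in the first paragraph buys: once $R=1$, the annulus $\mathcal A$, the ellipticity ratio $\Lambda/\lambda$ (equivalently the drift bound $\alpha/(b-3c)$), and the number $k$ of covering balls are all fixed, $R$-free quantities, so the constants produced by Moser iteration depend only on $N,m,\ell,\alpha,a,b,c$. A secondary point is the radius bookkeeping: the hypothesis $a>b>3c>0$ is precisely what guarantees $0\notin\overline B_{3c}(y_i)$ — keeping the weight two-sided bounded — while still placing the enlarged balls inside the prescribed annuli.
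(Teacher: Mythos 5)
Your proposal is correct and follows essentially the same route as the paper: reduce the weighted operator on the annulus to one satisfying Trudinger's structure conditions, cover the annulus $\overline B_{aR}\setminus B_{bR}$ by finitely many small balls staying inside the larger annulus, and apply the local maximum principle of \cite{T1967} on each ball. The only difference is that you rescale to $R=1$ first, whereas the paper applies the scale-invariant form of the local estimate directly at scale $R$; your version has the minor advantage of making the $R$-independence of the constant explicit.
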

\begin{proof}
Observe first that \eqref{eqwew} is equivalent to
$$
{\rm div}\big(|\nabla u|^{m-2}\nabla u\big)-\frac{\alpha}{|x|^2} |\nabla u|^{m-2}\nabla u\cdot x \geq 0\quad\mbox{ in }\Omega,
$$
which satisfies the structural assumptions in \cite{T1967}.

Let $z_1, z_2,\dots, z_k\in \Omega$ be such that $\{B_{cR}(z_i)\}_{1\leq i\leq k}$ is a finite cover with open balls of the compact set $\overline B_{aR}\setminus B_{bR}$. By the standard Harnack inequality (see Trudinger \cite[Theorem 1.3]{T1967}) we find
$$
R^{N/\ell} \sup_{B_{cR}(z_i)} u \leq C \Big(\int_{B_{2cR}(z_i)} u^\ell\Big)^{1/\ell} \leq C \Big(\int_{B_{(a+2c)R}\setminus B_{(b-2c)R}}u^\ell\Big)^{1/\ell}.
$$
Thus,
$$
R^{N/\ell} \sup_{B_{aR}\setminus B_{bR}} u\leq
R^{N/\ell} \sup_{\cup_{i=1}^k B_{cR}(z_i)} u \leq C\Big(\int_{B_{(a+2c)R}\setminus B_{(b-2c)R}}u^\ell\Big)^{1/\ell}.
$$
\end{proof}

\begin{proposition}\label{estg}
Assume
$u\in W^{1,m}_{loc}(B_1\setminus\{0\})\cap C(B_1\setminus\{0\})$ satisfies $u\geq 0$ and
$$
{\rm div}(|x|^{-\alpha}|\nabla u|^{m-2}\nabla u)\geq 0\quad\mbox{ in } B_1\setminus\{0\}.
$$
Then, either $u$ is bounded near the origin, or there exist $C>0$ and $r_0\in (0,1/2)$ such that
\begin{equation}\label{eqeq}
\sup_{|x|=r} \frac{u(x)}{\Phi_{m,\alpha}(x)} \geq C  \quad\mbox{ for all }r\in (0,r_0),
\end{equation}
where $\Phi_{m,\alpha}$ is defined in \eqref{phiphi}.
\end{proposition}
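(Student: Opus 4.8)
The plan is to prove the contrapositive of the dichotomy: assuming $u$ is \emph{not} bounded near the origin, I will exhibit $C>0$ and $r_0\in(0,1/2)$ for which \eqref{eqeq} holds. Since $u\in C(B_1\setminus\{0\})$, the spherical maximum $M(r):=\max_{|x|=r}u(x)$ is finite for every $r\in(0,1)$, and ``$u$ unbounded near $0$'' means precisely that $\limsup_{r\to0^+}M(r)=\infty$. As $\Phi_{m,\alpha}$ is radial, \eqref{eqeq} amounts to $M(r)\ge C\,\Phi_{m,\alpha}(r)$ for all sufficiently small $r$, which is the goal.

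The key step is a comparison, on each annulus, against affine functions of the fundamental solution. Fix $0<\rho<\tau<1$, put $A:=\{\rho<|x|<\tau\}$, and let $w:=a\,\Phi_{m,\alpha}+b$ with $a,b\in\R$ chosen — uniquely, because $r\mapsto\Phi_{m,\alpha}(r)$ is strictly monotone — so that $w\equiv M(\rho)$ on $\{|x|=\rho\}$ and $w\equiv M(\tau)$ on $\{|x|=\tau\}$. On $\overline A$ the function $\Phi_{m,\alpha}$ is smooth, has non-vanishing gradient, and solves $\operatorname{div}(|x|^{-\alpha}|\nabla\Phi_{m,\alpha}|^{m-2}\nabla\Phi_{m,\alpha})=0$; since this equation is preserved under the affine substitution $v\mapsto av+b$, the function $w$ is a classical solution on $\overline A$ as well. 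As $u$ is a weak subsolution in $A$ and $u\le w$ on $\partial A$ by construction, the comparison principle gives $u\le w$ in $A$. Evaluating this at a point of $\{|x|=s\}$ (with $\rho<s<\tau$) where $u$ attains its spherical maximum, and writing $\Phi_{m,\alpha}(s)=\lambda\,\Phi_{m,\alpha}(\rho)+(1-\lambda)\,\Phi_{m,\alpha}(\tau)$ with $\lambda\in(0,1)$, we obtain
\[
M(s)\ \le\ \lambda\,M(\rho)+(1-\lambda)\,M(\tau)\qquad\text{for all }0<\rho<s<\tau<1 .
\]
Equivalently, $\widetilde M:=M\circ\Phi_{m,\alpha}^{-1}$ is a \emph{convex} function on $\Phi_{m,\alpha}\big((0,1)\big)$; in particular $M$ is quasiconvex on $(0,1)$.

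It then remains to read off the rate from this convexity together with $\limsup_{r\to0^+}M(r)=\infty$. If $N\ge m+\alpha$, then $\Phi_{m,\alpha}(r)\to\infty$ as $r\to0$, so $\widetilde M$ is convex on an interval $(c,\infty)$ with $\limsup_{t\to\infty}\widetilde M(t)=\infty$; choosing $t_1<t_2$ with $\delta:=\big(\widetilde M(t_2)-\widetilde M(t_1)\big)/(t_2-t_1)>0$, the monotonicity of difference quotients of convex functions gives $\widetilde M(t)\ge\widetilde M(t_2)+\delta(t-t_2)$ for $t>t_2$, hence $\widetilde M(t)\ge\frac{\delta}{2}t$ for $t$ large, i.e.\ $M(r)\ge\frac{\delta}{2}\Phi_{m,\alpha}(r)$ for all small $r$ — this is \eqref{eqeq} with $C=\delta/2$. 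If instead $N<m+\alpha$, then $\Phi_{m,\alpha}$ is bounded on $(0,1)$; a convex function on $(0,1)$ that is unbounded near the left endpoint must be non-increasing there, hence tends to $+\infty$ there, so $M(r)\to\infty$ while $\Phi_{m,\alpha}(r)\to0$, and therefore $M(r)/\Phi_{m,\alpha}(r)\to\infty$, giving \eqref{eqeq} (say with $C=1$) once $r_0$ is chosen small enough.

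The step I expect to require the most care is the comparison $u\le w$ on the annulus. Since $u$ is given only as a weak subsolution tested against $C^\infty_c$ functions, one must justify testing the inequality for $u$ and the equation for $w$ with $\phi=(u-w)^+$: this $\phi$ lies in $W^{1,m}_0(A)$, is non-negative, and is reached by $W^{1,m}(A)$-approximation with non-negative $C^\infty_c(A)$ functions (using $|x|^{-\alpha}|\nabla u|^{m-2}\nabla u\in L^{m/(m-1)}(A)$ and $\overline A\subset\subset B_1\setminus\{0\}$); subtracting the two tested relations and invoking the strict monotonicity of $\xi\mapsto|\xi|^{m-2}\xi$ together with the boundedness, above and below, of $|x|^{-\alpha}$ on $\overline A$ forces $\nabla(u-w)^+\equiv 0$, whence $u\le w$. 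The degeneracy of the weight at the origin plays no role, since every comparison lives on a closed annulus bounded away from $0$; the remaining manipulations with convex functions are elementary.
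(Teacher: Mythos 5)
Your proof is correct, and it is actually more careful than the paper's at the one delicate point (the justification of the annular comparison), but it is organized quite differently. The paper argues in the contrapositive direction you did not take: negating \eqref{eqeq} gives $\liminf_{r\to 0}\sup_{|x|=r}u/\Phi_{m,\alpha}=0$, hence radii $r_k\to 0$ with $\sup_{|x|=r_k}u/\Phi_{m,\alpha}\le 1/k$; a single comparison on $B_{1/2}\setminus B_{r_k}$ with the supersolution $\tfrac1k\Phi_{m,\alpha}+\max_{|x|=1/2}u$ then yields $u\le \tfrac1k\Phi_{m,\alpha}+\max_{|x|=1/2}u$ there, and letting $k\to\infty$ shows $u$ is bounded on $B_{1/2}$. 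You instead run the same comparison on \emph{every} annulus with boundary data matching the spherical maxima, which upgrades the tool to a Hadamard three-spheres statement --- convexity of $M\circ\Phi_{m,\alpha}^{-1}$ for $M(r)=\max_{|x|=r}u$ --- and then read off the lower bound $M(r)\ge C\,\Phi_{m,\alpha}(r)$ from elementary properties of convex functions. Both proofs rest on exactly the same core device, namely comparison with affine functions $a\Phi_{m,\alpha}+b$ of the fundamental solution on annuli compactly contained in $B_1\setminus\{0\}$, where the weight $|x|^{-\alpha}$ is nondegenerate; the paper's version is shorter, while yours is constructive (the constant $C$ comes out as half a difference quotient rather than from a contradiction), records a stronger intermediate structural fact, and deals transparently with the case $N<m+\alpha$ in which $\Phi_{m,\alpha}(x)\to 0$ as $x\to 0$.
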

\begin{proof} Assume that \eqref{eqeq} does not hold. Hence,
$$
\liminf_{r\to 0} \Big(\sup_{|x|=r} \frac{u(x)}{\Phi_{m,\alpha}(x)}\Big)=0.
$$
Then, for any $k\geq 1$ there exists $r_k\in (0,1/2)$, with  $r_k\to0$ as $k\to\infty$, such that
$$
\sup_{|x|=r_k} \frac{u(x)}{\Phi_{m,\alpha}(x)}\leq \frac{1}{k}\quad\mbox{ for all }k\geq 1.
$$
A comparison principle in the annular region $B_{1/2}\setminus B_{r_k}$ shows that for all $k\geq 1$ we have
$$
u(x)\leq \frac{1}{k}\Phi_{m,\alpha}(x)+\max_{|x|=1/2}u(x)\quad\mbox{ in }B_{1/2}\setminus B_{r_k},
$$
Letting $k\to \infty$ in the above estimate we deduce that $u$ is bounded in the ball $B_{1/2}$.
\end{proof}
The result below provides a first important estimate for solutions to \eqref{main}.

\begin{proposition}\label{estko}{\rm (Keller-Osserman type estimates)}

Assume $p+q>2(m-1)$ and let $u\in W^{1,m}_{loc}(B_1\setminus\{0\})\cap C(B_1\setminus\{0\})$ be a positive solution of \eqref{main}. Then,  there exist $C>0$ such that
\begin{equation}\label{ttau}
u(x) \leq C |x|^{-\sigma} \quad\mbox{ in } B_1\setminus\{0\},
\end{equation}
where $\sigma>0$ is given by \eqref{sig}.
\end{proposition}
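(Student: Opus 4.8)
The plan is to combine the a priori bound of Proposition \ref{p1}, the weak Harnack inequality of Proposition \ref{weakh}, and an elementary lower bound for the nonlocal term, and then to bootstrap the blow-up rate of $u$ at the origin up to the fixed point $\sigma$. Throughout put $m_0:=\tfrac{p+q}{2}$, so that $m_0>m-1$ by hypothesis.

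First I would fix a preliminary polynomial bound: a positive solution $u$ of \eqref{main} is in particular a nonnegative subsolution of ${\rm div}(|x|^{-\alpha}|\nabla u|^{m-2}\nabla u)\ge0$ in $B_1\setminus\{0\}$, so the local boundedness estimate of Proposition \ref{weakh} (extended, if necessary, to an exponent $\le m-1$ in the standard way) together with $u\in L^p(B_1)$ gives $u(x)\le A_0|x|^{-\nu_0}$ for $|x|$ small, with some $\nu_0>0$ (e.g.\ $\nu_0=N/p$ when $p>m-1$). If $\nu_0\le\sigma$ we are done, so assume $\nu_0>\sigma$.

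The core is the self-improvement: \emph{if $u(x)\le A|x|^{-\nu}$ on $B_{r_0}$, $r_0\in(0,1/8)$, for some $\nu\ge\sigma$, then $u(x)\le A'|x|^{-\nu'}$ on $B_{r_0/4}$, with $\nu'=\tfrac{m+\alpha+\beta+(m-1)\nu}{p+q}$ and $A'=CA^{(m-1)/(p+q)}$, $C$ independent of the step.} Fix $x_0$, $\rho:=|x_0|<r_0/4$, and set $R:=\tfrac23\rho$, so $x_0\in B_{7R/4}\setminus B_{5R/4}\subset B_{2R}\setminus B_R$ and $B_{4R}\setminus B_{R/2}\subset B_1\setminus\{0\}$. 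Since $I_\beta>0$ and $|x-y|\le4R$ on $B_{2R}\setminus B_R$, there $(I_\beta\ast u^p)(x)\ge cR^{\beta-N}\int_{B_{2R}\setminus B_R}u^p$; hence, by Cauchy--Schwarz ($\int u^{m_0}\le(\int u^p)^{1/2}(\int u^q)^{1/2}$) and Proposition \ref{weakh} with exponent $m_0$,
$$
\int_{B_{2R}\setminus B_R}(I_\beta\ast u^p)u^q\ \ge\ cR^{\beta-N}\Big(\int_{B_{2R}\setminus B_R}u^{m_0}\Big)^2\ \ge\ cR^{N+\beta}\,u(x_0)^{p+q}.
$$
On the other hand, Proposition \ref{p1} (with $\Omega=B_1\setminus\{0\}$, $f=(I_\beta\ast u^p)u^q$, $\ell=m_0$, $\lambda$ large) and the hypothesis $u\le A|x|^{-\nu}$, which gives $\int_{B_{4R}\setminus B_{R/2}}u^{m_0}\le CA^{m_0}R^{N-\nu m_0}$ (the borderline $\nu m_0=N$ costing only a logarithm), yield
$$
\int_{B_{2R}\setminus B_R}(I_\beta\ast u^p)u^q\ \le\ CR^{\,N-m-\alpha-\frac{(m-1)N}{m_0}}\Big(\int_{B_{4R}\setminus B_{R/2}}u^{m_0}\Big)^{\frac{m-1}{m_0}}\ \le\ CA^{m-1}R^{\,N-m-\alpha-(m-1)\nu}.
$$
Comparing the two displays and recalling $R\asymp|x_0|$ gives $u(x_0)^{p+q}\le CA^{m-1}|x_0|^{-(m+\alpha+\beta+(m-1)\nu)}$, i.e.\ the claimed self-improvement.

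The map $\nu\mapsto\nu'$ is affine with slope $\tfrac{m-1}{p+q}\in(0,\tfrac12)$ (by $p+q>2(m-1)$) and fixed point $\sigma$, so iterating from $\nu_0$ gives $\nu_n\downarrow\sigma$; moreover $\log A_n\le\log C+\tfrac{m-1}{p+q}\log A_{n-1}$, so $A_n\le A^*$ for all $n$. The stage-$n$ bound holds only on $B_{r_0/4^n}$, so for small $|x|$ one chooses $n\asymp\log(r_0/|x|)$, for which $\nu_n-\sigma$ is comparable to $(|x|/r_0)^{b}$ with $b=\log_4\tfrac{p+q}{m-1}>0$, hence $|x|^{-(\nu_n-\sigma)}$ remains bounded as $|x|\to0$; this turns $u(x)\le A^*|x|^{-\nu_n}$ into $u(x)\le C|x|^{-\sigma}$ near the origin, while for $|x|$ bounded away from $0$ the bound is trivial by continuity. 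I expect the main difficulty to be precisely this iteration --- getting it started (whence the preliminary polynomial bound) and controlling the growing constants and the shrinking domains so as not to lose the sharp exponent; note that it is the nonlocal lower bound $(I_\beta\ast u^p)(x)\ge cR^{\beta-N}\int_{B_{2R}\setminus B_R}u^p$ that supplies the extra $\beta$, producing $\sigma$ rather than a purely local Keller--Osserman exponent.
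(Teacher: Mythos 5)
Your proof is correct, but it takes a genuinely different (and longer) route than the paper's. The paper also starts from Proposition \ref{p1} with $\ell=(p+q)/2$ and the same elementary annular lower bound $(I_\beta\ast u^p)(x)\ge cR^{\beta-N}\int u^p\phi^\lambda$, but instead of converting the right-hand side into a pointwise rate and iterating, it keeps the single quantity $X=\int_{B_1} u^\ell\phi^\lambda$ on both sides: the nonlocal lower bound plus Cauchy--Schwarz gives $X^{2}\le CR^{\tau}X^{(m-1)/\ell}$ with $\tau=2N-m-\alpha-\beta-\frac{m-1}{\ell}N$, and since $2-\frac{m-1}{\ell}>0$ one simply divides and applies the weak Harnack inequality once to obtain $u\le CR^{-\sigma}$ directly --- no initial polynomial bound, no fixed-point iteration, no bookkeeping of constants or shrinking domains; the hypothesis $p+q>2(m-1)$ enters only as $\ell>m-1$. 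Your bootstrap is a legitimate alternative: the self-improving map $\nu\mapsto\frac{m+\alpha+\beta+(m-1)\nu}{p+q}$ is affine with fixed point $\sigma$ and contraction factor $\frac{m-1}{p+q}<\frac12$, and your control of the accumulated constants and of $|x|^{-(\nu_n-\sigma)}$ along the shrinking balls is sound. The price is the starting bound: rather than invoking an extension of Proposition \ref{weakh} to exponents $\le m-1$ when $p\le m-1$, it is cleaner to observe that $u\in L^p(B_1)$ forces ${\rm div}\big(|x|^{-\alpha}|\nabla u|^{m-2}\nabla u\big)\ge c\,u^q$ and to apply Proposition \ref{lem1} with $\theta=0$ (using the standing assumption $q>m-1$), which gives $u\le C|x|^{-\frac{m+\alpha}{q-m+1}}$. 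One small inaccuracy: on the annulus $B_{4R}\setminus B_{R/2}$ one has $\int u^{m_0}\le CA^{m_0}R^{N-\nu m_0}$ with no logarithmic correction even when $\nu m_0=N$, since the radial integration runs over an interval of fixed ratio.
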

\begin{proof}
We use Proposition \ref{p1} with $\Omega=B_1$, $R\in (0,1/4)$, $f(x)=(I_\beta\ast u^p)u^q$ and $\ell=(p+q)/2>m-1$. From \eqref{mainest1}  we find
{  \begin{equation}\label{fe1}
C R^{N-m-\alpha-\frac{m-1}{\ell}N}
\Big(\int_{B_1} u^\ell\phi^\lambda \Big)^{\frac{m-1}{\ell}}\geq
\int_{B_1} (I_\beta\ast u^p)u^q \phi^\lambda,
\end{equation}}
where $\phi\in C_c^\infty(B_1\setminus\{0\})$ and $\lambda>m$ are chosen as in Proposition \ref{p1}.
If  $x,y\in B_{{2R}}\subset \text{supp }\phi$, then $|x-y|\leq |x|+|y|\leq 4R$ so
$$
\begin{aligned}
(I_\beta*u^p)(x)&\geq C\int_{B_{4R}}\frac{u^{p}(y)}{|x-y|^{N-\beta}}dy\\
&\geq C\int_{B_{2R}}\frac{u^{p}(y)}{(4R)^{N-\beta}}dy\\
&\geq CR^{\beta-N}\int_{B_1}u^{p}(y)\phi^\lambda(y) dy,
\end{aligned}
$$
since $0\le\phi\le1$.
Using this fact in \eqref{fe1} together with H\"older's inequality, for $\ell=(p+q)/2$ we find
$$
CR^{\tau} \Big(\int_{B_1} u^\ell\phi^\lambda \Big)^{\frac{m-1}{\ell}}\geq \Big(\int_{B_1} u^p\phi^\lambda \Big)
\Big(\int_{B_1} u^q\phi^\lambda \Big)\geq \Big(\int_{B_1} u^\ell\phi^\lambda \Big)^{2},
$$
where
\begin{equation}\label{ta}
\tau=2N-m-\alpha-\beta-\frac{m-1}{\ell}N.
\end{equation}
Now, using the fact that $\phi=1$ in $B_{2R}\setminus B_R$ and the weak Harnack inequality \eqref{whi} with $a=7/4$, $b=5/4$ and $c=1/8$ we deduce
$$
\begin{aligned}
CR^\tau&\geq  \Big(\int_{B_1} u^\ell\phi^\lambda \Big)^{2-\frac{m-1}{\ell}}\geq \Big(\int_{B_{2R}\setminus B_R} u^\ell  \Big)^{2-\frac{m-1}{\ell}}\\
&\geq\Big( R^N\sup_{\frac{5R}{4}<|x|<\frac{7R}{4}} u^\ell\Big)^{2-\frac{m-1}{\ell}}\\
&\geq R^{2N-\frac{m-1}{\ell}N} \Big( \sup_{\frac{5R}{4}<|x|<\frac{7R}{4}} u^{p+q-m+1} \Big).
\end{aligned}
$$
From here and \eqref{ta} we derive \eqref{ttau}.
\end{proof}
Similar to Proposition \ref{estko} we have:
\begin{proposition}\label{lem1}
Let $\theta\geq 0$ and $q>\max\{m-1,\theta\}$.

If $u\in W^{1,m}_{loc}(B_1\setminus\{0\})\cap C(\overline B_1\setminus\{0\})$ is positive and satisfies
\begin{equation}\label{eqw1}
{\rm div}(|x|^{-\alpha}|\nabla u|^{m-2}\nabla u) \geq |x|^{-\theta}u^q\quad\mbox{ in }B_1\setminus\{0\},
\end{equation}
then
\begin{equation}\label{eqw2}
w(x)\leq C|x|^{-\frac{m+\alpha-\theta}{q-m+1}}\quad\mbox{ for all }x\in B_1\setminus\{0\},
\end{equation}
for some constant $C>0$.
\end{proposition}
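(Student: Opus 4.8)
The plan is to adapt, essentially line by line, the proof of Proposition \ref{estko}: now the source term $|x|^{-\theta}u^q$ is a pure power of $u$, so no H\"older splitting of a convolution is required and one may take the integrability exponent equal to $q$ directly.

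First I would apply Proposition \ref{p1} with $\Omega=B_1\setminus\{0\}$, $R\in(0,1/4)$, $f(x)=|x|^{-\theta}u^q$ (which is nonnegative and lies in $L^1_{loc}(\Omega)$ since $u$ is continuous away from the origin) and $\ell=q>m-1$. Choosing $\phi\in C^\infty_c(B_1\setminus\{0\})$ and $\lambda>\Lambda(m,q)$ as in that proposition, \eqref{mainest1} yields
$$
\int_{B_1}|x|^{-\theta}u^q\phi^\lambda\le CR^{N-m-\alpha-\frac{m-1}{q}N}\Big(\int_{B_1}u^q\phi^\lambda\Big)^{\frac{m-1}{q}}.
$$
Because $\text{supp}\,\phi\subset B_{4R}\setminus B_{R/2}$ and $\theta\ge0$, on $\text{supp}\,\phi$ we have $|x|^{-\theta}\ge(4R)^{-\theta}=cR^{-\theta}$; moreover $0<\int_{B_1}u^q\phi^\lambda<\infty$ since $u$ is bounded on the compact set $\text{supp}\,\phi\subset B_1\setminus\{0\}$. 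Inserting these facts and then absorbing the factor $\big(\int_{B_1}u^q\phi^\lambda\big)^{(m-1)/q}$ into the left-hand side — legitimate because $1-\frac{m-1}{q}=\frac{q-m+1}{q}>0$ — I would obtain, after raising to the power $\frac{q}{q-m+1}$ and simplifying the exponent,
$$
\int_{B_1}u^q\phi^\lambda\le CR^{\,N-\frac{q(m+\alpha-\theta)}{q-m+1}}.
$$

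Next, since \eqref{eqw1} in particular forces ${\rm div}(|x|^{-\alpha}|\nabla u|^{m-2}\nabla u)\ge0$ in $B_1\setminus\{0\}$, I would invoke the weak Harnack inequality (Proposition \ref{weakh}) with $\ell=q$ and $a=7/4$, $b=5/4$, $c=1/8$ (so $a+2c=2$, $b-2c=1$), exactly as in the proof of Proposition \ref{estko}; combined with $\phi\equiv1$ on $B_{2R}\setminus B_R$ this gives
$$
R^{N}\sup_{\frac{5R}{4}<|x|<\frac{7R}{4}}u^{q}\le C\int_{B_{2R}\setminus B_R}u^q\le C\int_{B_1}u^q\phi^\lambda\le CR^{\,N-\frac{q(m+\alpha-\theta)}{q-m+1}},
$$
hence $\sup_{\frac{5R}{4}<|x|<\frac{7R}{4}}u\le CR^{-\frac{m+\alpha-\theta}{q-m+1}}$ for every $R\in(0,1/4)$. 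Given $x$ with $0<|x|<1/4$, the choice $R=\tfrac23|x|$ places $x$ in this annulus, so $u(x)\le C|x|^{-\frac{m+\alpha-\theta}{q-m+1}}$; and since $u\in C(\overline B_1\setminus\{0\})$ is bounded on $\overline B_1\setminus B_{1/4}$, enlarging $C$ extends \eqref{eqw2} to all of $B_1\setminus\{0\}$.

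The argument is routine once Propositions \ref{p1} and \ref{weakh} are available — there is no genuine obstacle. The only points needing a little care are the bookkeeping of exponents in the absorption step and the verification that every integral that appears is finite, the latter being guaranteed by the continuity of $u$ on $B_1\setminus\{0\}$ together with the compact support of $\phi$ in $B_1\setminus\{0\}$.
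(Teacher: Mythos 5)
Your proposal is correct and follows essentially the same route as the paper's proof: apply Proposition \ref{p1} with $\ell=q$, bound $|x|^{-\theta}\geq cR^{-\theta}$ on $\mathrm{supp}\,\phi$, absorb the factor $\big(\int u^q\phi^\lambda\big)^{(m-1)/q}$, and conclude via the weak Harnack inequality with the same parameters $a=7/4$, $b=5/4$, $c=1/8$. The only difference is presentational — you isolate $\int u^q\phi^\lambda\leq CR^{N-\frac{q(m+\alpha-\theta)}{q-m+1}}$ before invoking Harnack and spell out the final localization $R=\tfrac23|x|$, which the paper leaves implicit.
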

\begin{proof}  According to \eqref{mainest1} with $\ell=q>m-1$ we have
$$
C R^{N-m-\alpha-\frac{m-1}{q}N}
\Big(\int_{B_1} u^q\phi^\lambda \Big)^{\frac{m-1}{q}}\geq
\int_{B_1} |x|^{-\theta}u^q \phi^\lambda.
$$
Since supp$\,\phi\subset B_{4R}\setminus B_{R/2}$, from the above estimate and the weak Harnack inequality \eqref{whi}
with $a=7/4$, $b=5/4$ and $c=1/8$ it follows that
$$
\begin{aligned}
C R^{N-m-\alpha-\frac{m-1}{q}N} &
\geq R^{-\theta}\Big(\int_{B_{2R}\setminus B_R} u^q \Big)^{1-\frac{m-1}{q}}\\
&\geq R^{-\theta}\Big( R^N\sup_{\frac{5R}{4}<|x|<\frac{7R}{4}} u^q\Big)^{1-\frac{m-1}{\ell}}\\
&\geq R^{N-\theta-\frac{m-1}{q}N} \Big( \sup_{\frac{5R}{4}<|x|<\frac{7R}{4}} u^{q-m+1} \Big).
\end{aligned}
$$
From here, we easily deduce \eqref{eqw2}.
\end{proof}

{
\begin{lemma}\label{lem2} {\rm (See \cite[Theorem 1.1]{SYW2016})}
Let $m>1$, $N\geq m+\alpha>\theta$ and
\begin{equation}\label{range_q_theta}
\left\{
\begin{aligned}
&m-1<q<\frac{(N-\theta)(m-1)}{N-m-\alpha} &&\quad\mbox{ if \,}N>m+\alpha,\\
&m-1<q<\infty &&\quad\mbox{ if \,}N=m+\alpha.
\end{aligned}
\right.
\end{equation}
Let $u\in W^{1,m}(B_1\setminus\{0\})\cap C(B_1\setminus\{0\})$, $u\geq 0$, be a singular solution of
\begin{equation}\label{eqte}
{\rm div}(|x|^{-\alpha}|\nabla w|^{m-2}\nabla w)=|x|^{-\theta}w^q\quad\mbox{ in }B_1\setminus\{0\}.
\end{equation}
Then,
\begin{center}
either $\; w\asymp \Phi_{m,\alpha}(x)\;\;$ or $\;\; w\asymp |x|^{-\frac{m+\alpha-\theta}{q-m+1}}$.
\end{center}
\end{lemma}
}

\begin{proposition}\label{vv} Assume $N>m+\alpha$ and $q\geq \frac{N(m-1)}{N-m-\alpha}$. Then, any solution of \eqref{main} is bounded around the origin.
\end{proposition}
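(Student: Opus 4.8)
The plan is to show that under the condition $q\geq \frac{N(m-1)}{N-m-\alpha}$ a positive solution of \eqref{main} cannot blow up at the origin, by combining the a priori estimate \eqref{mainest1} with the weak Harnack inequality to derive a growth bound that is incompatible with an unbounded solution unless the solution is in fact bounded. The key observation is that discarding the nonlocal factor $I_\beta\ast u^p\geq c>0$ (which is bounded below by a positive constant on $\overline{B}_1\setminus\{0\}$, as recorded in \eqref{uLp}) reduces \eqref{main} to the simpler inequality ${\rm div}(|x|^{-\alpha}|\nabla u|^{m-2}\nabla u)\geq c\,u^q$, so that Proposition \ref{lem1} with $\theta=0$ applies provided $q>m-1$, giving $u(x)\leq C|x|^{-(m+\alpha)/(q-m+1)}$. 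The exponent here satisfies $\frac{m+\alpha}{q-m+1}\leq \frac{N-m-\alpha}{m-1}$ precisely when $q\geq \frac{N(m-1)}{N-m-\alpha}$, i.e. the bound we get is \emph{no stronger} than the fundamental solution $\Phi_{m,\alpha}$. This is the crucial numerology.

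First I would fix $R\in(0,1/4)$ and a cutoff $\phi$ as in Proposition \ref{p1}, apply \eqref{mainest1} with $\ell=q$ and $f(x)=(I_\beta\ast u^p)u^q\geq c\,u^q$, and combine with the weak Harnack inequality \eqref{whi} (with $a=7/4$, $b=5/4$, $c=1/8$) exactly as in the proof of Proposition \ref{lem1}; this yields
$$
\sup_{\frac{5R}{4}<|x|<\frac{7R}{4}} u \leq C\,R^{-\frac{m+\alpha}{q-m+1}}\qquad\text{for all }R\in(0,1/4),
$$
hence $u(x)\leq C|x|^{-\frac{m+\alpha}{q-m+1}}$ near the origin. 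Since $\frac{m+\alpha}{q-m+1}\leq\frac{N-m-\alpha}{m-1}$ by the hypothesis on $q$, we obtain $u(x)\leq C\,\Phi_{m,\alpha}(x)$ in $B_{1/2}\setminus\{0\}$.

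Next I would upgrade this to boundedness. Because ${\rm div}(|x|^{-\alpha}|\nabla u|^{m-2}\nabla u)\geq 0$ in $B_1\setminus\{0\}$, Proposition \ref{estg} gives the dichotomy: either $u$ is bounded near the origin — in which case we are done — or $\sup_{|x|=r} u(x)/\Phi_{m,\alpha}(x)\geq C>0$ for all small $r$. In the second case, combine the upper bound $u\leq C\Phi_{m,\alpha}$ with the strong Harnack inequality on spheres (Proposition \ref{strongh}, applied on balls $B_{|x|/4}(x)$ to the subsolution $u$, viewed as solving the equation with $a\equiv 0$, after a standard covering of the sphere $|x|=r$) to deduce $u(x)\asymp \Phi_{m,\alpha}(x)$ on $B_{r_0}\setminus\{0\}$. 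Plugging $u\asymp\Phi_{m,\alpha}$ into the right-hand side of \eqref{main} and integrating the resulting differential inequality against the test function — or, more cleanly, testing \eqref{var} with a cutoff and using that $(I_\beta\ast u^p)u^q\gtrsim \Phi_{m,\alpha}^q \gtrsim |x|^{-\frac{(N-m-\alpha)q}{m-1}}$, whose exponent is $\geq N$ exactly when $q\geq \frac{N(m-1)}{N-m-\alpha}$ — shows $(I_\beta\ast u^p)u^q\notin L^1_{loc}(B_1)$, contradicting the integrability required of a solution. Hence the second alternative is impossible and $u$ is bounded around the origin.

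\textbf{Main obstacle.} The delicate point is the final contradiction: one must be careful that the exponent $\frac{(N-m-\alpha)q}{m-1}$ of the lower bound for the right-hand side genuinely reaches or exceeds $N$ (so that the integral over a punctured ball diverges), and that in the borderline case $q=\frac{N(m-1)}{N-m-\alpha}$ the logarithmic corrections — both the possible $\log$ in $\Phi_{m,\alpha}$ when $N=m+\alpha$ and the borderline $|x|^{-N}$ behavior — still force divergence; this requires tracking constants in the Harnack comparison carefully rather than just orders of magnitude. A secondary technical issue is justifying the pointwise two-sided bound $u\asymp\Phi_{m,\alpha}$ from the one-sided sphere estimate \eqref{eqeq}, which needs the Harnack chaining along circles to be uniform as $r\to 0$; this is standard for the $m$-Laplacian but must be invoked with the scaling-invariant form of Proposition \ref{strongh}.
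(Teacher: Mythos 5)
Your opening reduction coincides with the paper's: since $u\in L^p(B_1)$ the Riesz term is bounded below by a positive constant, so Proposition \ref{lem1} with $\theta=0$ gives $u(x)\le C|x|^{-\frac{m+\alpha}{q-m+1}}$, and the exponent comparison with $\frac{N-m-\alpha}{m-1}$ is exactly the right numerology. In fact, when $q>\frac{N(m-1)}{N-m-\alpha}$ \emph{strictly}, this bound is strictly smaller than $\Phi_{m,\alpha}$, so $\sup_{|x|=r}u/\Phi_{m,\alpha}\to0$ and Proposition \ref{estg} alone already forces boundedness; you do not even need the rest of your argument there. The entire difficulty sits in the borderline case $q=\frac{N(m-1)}{N-m-\alpha}$, and there your proposal has two genuine gaps.

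First, you cannot upgrade the one-sided sphere estimate \eqref{eqeq} to $u\asymp\Phi_{m,\alpha}$ by invoking Proposition \ref{strongh}: a solution of \eqref{main} is only a \emph{subsolution} of the homogeneous equation (the inequality is one-sided), and the two-sided Harnack bound $\max\le C\min$ fails for subsolutions --- only the sup-estimate of Proposition \ref{weakh} survives. Proposition \ref{strongh} is used in the paper only for the double inequality \eqref{main1}, where both sub- and supersolution structure are available. So no pointwise lower bound $u\ge c\,\Phi_{m,\alpha}$ can be extracted. Second, even granting $u\asymp\Phi_{m,\alpha}$, the intended contradiction does not exist: the definition of a solution only requires $(I_\beta\ast u^p)u^q\in L^1_{loc}(B_1\setminus\{0\})$, not integrability across the origin, and the test functions in \eqref{var} vanish near $0$. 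If you instead try to force divergence through \eqref{mainest1}, a direct computation in the critical case shows the estimate is saturated on every dyadic annulus: $R^{N-m-\alpha-\frac{m-1}{q}N}=R^{0}$ and $\int_{B_{2R}\setminus B_R}\Phi_{m,\alpha}^{\,q}\asymp1$, so both sides are of order one and nothing is gained. This is precisely why the paper abandons the pointwise/Harnack route here and runs a V\'azquez--V\'eron removability argument instead: from \eqref{condx2} it derives the Caccioppoli-type estimate of Lemma \ref{lx} for $(u-M)^+$, uses cutoffs concentrated on the annuli $A_k=B_{1/k}\setminus B_{1/(2k)}$ to prove first $u\in L^\nu_{loc}(B_1)$ and then, splitting on $\nu\ge m$ versus $\nu<m$, that the boundary terms $\Gamma_k$ are $o(1)$, whence $u\le M$ near the origin. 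To close the critical case you would need an argument of that strength.
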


\begin{proof} We use some tools from \cite[Proposition 1.2]{VV1980}. Let
$$
\nu=\frac{N(m-1)}{N-m-\alpha}
$$
and let $u$ be a positive solution of \eqref{main}. We note that since $u\in L^p(B_1)$, $u$ satisfies
\begin{equation}\label{condx0}
{\rm div}(|x|^{-\alpha}|\nabla u|^{m-2}\nabla u)\geq  c\, u^{q}\quad\mbox{ in }B_1\setminus\{0\},
\end{equation}
where $c=2^{\alpha-N}\int_{B_1}u^p>0$, by \eqref{uLp}. Using Proposition \ref{lem1} (with $\theta=0$ and being $q\geq \nu>m-1$) we deduce
$$
u(x)\leq C|x|^{-\frac{m+\alpha}{q-m+1}}\quad\mbox{ in }B_1\setminus\{0\}.
$$
In particular, again by $q\geq \nu$, it follows that
\begin{equation}\label{condx1}
u(x)\leq C|x|^{-\frac{m+\alpha}{\nu-m+1}}\quad\mbox{ in }B_1\setminus\{0\}.
\end{equation}
Also, from \eqref{condx0} we deduce
\begin{equation}\label{condx2}
{\rm div}(|x|^{-\alpha}|\nabla u|^{m-2}\nabla u)\geq cu^{\nu}-C\quad\mbox{ in }B_1\setminus\{0\},
\end{equation}
for some $C>0$.

In order to proceed to the proof of Proposition \ref{vv} we need the following result.

\begin{lemma}\label{lx}
Assume $u$ satisfies \eqref{condx2}. Then, for any $\phi\in C^1_c(B_1\setminus\{0\})$, $\phi\geq 0$ and any number $M\geq (C/c)^\nu$ we have
\begin{equation}\label{condx3}
\Big\||x|^{-\alpha/m}\phi|\nabla (u-M)^+|\Big\|_{L^m(B_1)}\leq m
\Big\||x|^{-\alpha/m} (u-M)^+ |\nabla \phi |\Big\|_{L^m(B_1)}.
\end{equation}
\end{lemma}
\begin{proof}[Proof of Lemma \ref{lx}] Let $\{\eta_k\}\subset C^1(\R)$ be such that $\eta_k\ge0$,
$$
\eta_k'=0\mbox{ on }(-\infty, 0), \quad \eta_k'>0\mbox{ on }(0,\infty),
$$
$$
\eta_k'(t)\to {\rm sign}^+(t), \quad \eta_k(t)\to t^+\quad\mbox{ as }k\to \infty,
$$
where ${\rm sign}^+(t)=1$ if $t>0$ and ${\rm sign}^+(t)=0$ if $t<0$.
Take $\eta_k(u-M)\phi^m$ as a test function in \eqref{condx2}. We find
$$
\int_{B_1} (cu^\nu-C)\eta_k(u-M)\phi^m dx+\int_{B_1} |x|^{-\alpha}|\nabla u|^{m-2}\nabla u\cdot \nabla\big(\eta_k(u-M)\phi^m\big) dx\leq 0.
$$
Since $(cu^\nu-C)\eta_k(u-M)\phi^m\geq 0$, by the choice of $M$, it follows that
$$
\int_{B_1} |x|^{-\alpha}\phi^m\eta_k'(u-M)  |\nabla u|^{m-2}\nabla u\cdot \nabla(u-M)^+  dx + m\int_{B_1} |x|^{-\alpha} \phi^{m-1}\eta_k(u-M) |\nabla u|^{m-2} \nabla u\cdot \nabla \phi dx\leq 0.
$$
Letting $k\to \infty$, by Fatou's lemma we find
$$
\int_{B_1} |x|^{-\alpha} \phi^m  |\nabla (u-M)^+|^{m} dx +m
\int_{B_1} |x|^{-\alpha} \phi^{m-1} (u-M)^+ |\nabla u|^{m-2} \nabla u\cdot \nabla \phi dx\leq 0,
$$
so
\begin{equation}\label{condx4}
\int_{B_1} |x|^{-\alpha} \phi^m  |\nabla (u-M)^+|^{m} dx \leq m
\int_{B_1} |x|^{-\alpha} \phi^{m-1} (u-M)^+ |\nabla u|^{m-1} |\nabla \phi| dx.
\end{equation}
By H\"older's inequality and since  $(u-M)^+ |\nabla u|=(u-M)^+ |\nabla (u-M)^+|$,  we estimate the right hand-side of \eqref{condx4} as
\begin{equation}\label{condx5}
\begin{aligned}
\int_{B_1} |x|^{-\alpha} \phi^{m-1} (u-M)^+ |\nabla u|^{m-1} |\nabla \phi| dx  \leq &
\Big( \int_{B_1} |x|^{-\alpha} \phi^m  |\nabla (u-M)^+|^{m} dx \Big)^{1/m'}\times\\
&\times \Big(\int_{B_1}|x|^{-\alpha}|\nabla \phi|^m|(u-M)^+|^m dx\Big)^{1/m},
\end{aligned}
\end{equation}
where $m'$ is the H\"older conjugate of $m$. Using \eqref{condx5} into \eqref{condx4} we deduce \eqref{condx3}.
\end{proof}
We are now ready to proceed to the proof of Proposition \ref{vv} whose arguments will be divided into two steps.

\noindent{\bf Step 1:} $u\in L^\nu_{loc}(B_1)$. Let $\eta\in C^1(\R)$ be such that $\eta\geq 0$, $\eta$ is bounded, $\eta=0$ on $(-\infty, 0)$ and $\eta'>0$ on $(0,\infty)$. Let also $\{\zeta_k\}\in C^1_c(\R^N)$ be such that
$$
\zeta_k(x)=\left\{
\begin{aligned}
0 \quad&\mbox{ if }|x|<\frac{1}{2k} \mbox{ or }|x|>\frac{2}{3},\\
1 \quad&\mbox{ if }\frac{1}{k}<|x|<\frac{1}{2},
\end{aligned}
\right.
\quad\mbox{ and }\quad|\nabla \zeta_k|\leq Ck.
$$
Define $A_k=B_{1/k}\setminus B_{1/(2k)}$ and
$$
M\geq \max\big\{(C/c)^q, \max_{1/2\leq |x|\leq 2/3}u(x)\big\}.
$$
We next test \eqref{condx2} with $\zeta_k\eta(u-M)$. We find
$$
\int_{B_1} (cu^\nu-C)\zeta_k \eta (u-M) dx+\int_{B_1} |x|^{-\alpha}|\nabla u|^{m-2}\nabla u\cdot \nabla\big(\zeta_k \eta(u-M) \big) dx\leq 0.
$$
Since $\eta'\geq 0$ and $|\nabla u|^{m-2}\nabla u\nabla (u-M)= |\nabla u|^m\ge0$, it follows that
\begin{equation}\label{thetaa}
\int_{B_1} (cu^\nu-C)\zeta_k \eta (u-M) dx\leq \Gamma_k:=
\int_{B_1} |x|^{-\alpha}\eta(u-M)|\nabla u|^{m-1}|\nabla \zeta_k| dx.
\end{equation}
Observe that $\eta(u-M)\nabla \zeta_k=0$ outside of $A_k$, being $M\ge \max_{1/2\leq |x|\leq 2/3}u(x)$.
Using the fact that $\eta$ is bounded together with H\"older's inequality we find
\begin{equation}\label{condx6}
\begin{aligned}
\Gamma_k &\leq \|\eta\|_\infty\int_{A_k}  |x|^{-\alpha} |\nabla (u-M)^+|^{m-1}|\nabla \zeta_k| dx\\
&\leq C
\Big\||x|^{-\alpha/m}|\nabla (u-M)^+|\Big\|_{L^m(A_k)}^{m-1}
\Big\||x|^{-\alpha/m} |\nabla \zeta_k |\Big\|_{L^m(A_k)}.
\end{aligned}
\end{equation}
By the definition of $\zeta_k$ and the fact that $|\nabla \zeta_k|\leq ck$ we have
$$
\Big\| |x|^{-\alpha/m} |\nabla \zeta_k |\Big\|_{L^m(A_k)}\leq C k^{1-\frac{N-\alpha}{m}}.
$$
Using this fact in \eqref{condx6} together with $\zeta_{2k}=1$ in $A_k$ and $\zeta_k\ge0$ in $A_{2k}$, we further estimate
\begin{equation}\label{condx7}
\begin{aligned}
\Gamma_k &\leq C k^{1-\frac{N-\alpha}{m}} \Big\||x|^{-\alpha/m}|\nabla (u-M)^+|\Big\|_{L^m(A_k)}^{m-1}\\
& \leq C k^{1-\frac{N-\alpha}{m}} \Big\||x|^{-\alpha/m}\zeta_{2k} |\nabla (u-M)^+|\Big\|_{L^m(A_{2k}\cup A_k)}^{m-1}\\
& \leq C k^{1-\frac{N-\alpha}{m}} \Big\||x|^{-\alpha/m}(u-M)^+|\nabla \zeta_{2k} | \Big\|_{L^m(A_{2k})}^{m-1},
\end{aligned}
\end{equation}
where in the last inequality we have used \eqref{condx3} with $\phi=\zeta_{2k}$ and the fact that $\nabla \zeta_{2k}=0$ in $A_k$.
From \eqref{condx1} we have
$$
\int_{A_{2k}}|x|^{-\alpha}|(u-M)^+|^m|\nabla \zeta_{2k}|^{m}dx\leq Ck^{\alpha+m}\int_{A_{2k}}
|(u-M)^+|^m\leq Ck^{\alpha+m-N+\frac{m(m+\alpha)}{\nu-m+1}}.
$$
Hence, from \eqref{condx7} we deduce
$$
\Gamma_k\leq Ck^{1-\frac{N-\alpha}{m}+\big(\alpha+m-N+\frac{m(m+\alpha)}{\nu-m+1}  \big)\frac{m-1}{m}}=C.
$$
We now replace $\eta$ in \eqref{thetaa} by a sequence $\{\eta_n\}$ such that $\eta_n(t)\to {\rm sign}^+(t)$ as $n\to \infty$.
Letting $n\to \infty$ and then $k\to \infty$ in \eqref{thetaa}, since  supp $\zeta_k=\overline{B}_{2/3}$ and $\zeta_k\to 1$  in $B_{1/2}$,  we find
$$
\int_{B_1} (cu^\nu-C){\rm sign}^+(u-M) dx\leq C,
$$
so $u\in L^\nu_{loc}(B_1)$.
\medskip

\noindent{\bf Step 2:} $u\in L^\infty_{loc}(B_1)$. We return to the estimate \eqref{condx7} and split our analysis into two cases.
\begin{itemize}
\item Case 2.1: $\nu\geq m$.  By H\"older's inequality we find
$$
\begin{aligned}
\Big\||x|^{-\alpha/m}(u-M)^+|\nabla \zeta_{2k} | \Big\|_{L^m(A_{2k})}&\leq Ck^{\frac{\alpha}{m}+1} \big\|(u-M)^+ \big\|_{L^m(A_{2k})}\\
&\leq Ck^{\frac{\alpha}{m}+1}\big\|(u-M)^+\big\|_{L^\nu(A_{2k})} |A_{2k}|^{\frac{1}{m}-\frac{1}{\nu}}\\
&= C k^{\frac{\alpha}{m}+1-N\big(\frac{1}{m}-\frac{1}{\nu}\big)}o(1) \;\mbox{ as }k\to \infty.
\end{aligned}
$$
Using this estimate in \eqref{condx7} we deduce
$\Gamma_k \le k^{\frac{N(m-1)}{\nu}-N+m+\alpha}o(1)=o(1)$ as $k\to \infty$, thanks to the value of $\nu$.

\item Case 2.2: $\nu< m$.  From \eqref{condx1} we have
$$
\begin{aligned}
\Big\||x|^{-\alpha/m}(u-M)^+|\nabla \zeta_{2k} | \Big\|_{L^m(A_{2k})}&\leq Ck^{\frac{\alpha}{m}+1} \big\|(u-M)^+ \big\|_{L^m(A_{2k})}\\
&\leq Ck^{\frac{\alpha}{m}+1} \sup_{A_{2k}} |(u-M)^+|^{1-\frac{\nu}{m}} \big\|(u-M)^+ \big\|_{L^\nu(A_{2k})}^{\frac{\nu}{m}} \\
&= Ck^{\frac{\alpha}{m}+1} \sup_{A_{2k}} |(u-M)^+|^{1-\frac{\nu}{m}} o(1) \\
&\leq C k^{\frac{\alpha}{m}+1+\frac{m+\alpha}{\nu-m+1}\big(1-\frac{\nu}m\big)}
o(1)\;\mbox{ as }k\to \infty,
\end{aligned}
$$
and from \eqref{condx7} we again derive $\Gamma_k \le C k^{[N(m-1)-\nu(N-m-\alpha)]/m}o(1)=o(1)$ as $k\to \infty$,
 thanks to the value of $\nu$.
\end{itemize}

\noindent We now return to \eqref{thetaa} and let $k\to \infty$ to deduce
$$
\int_{B_{1/2}} (cu^\nu-C) \eta (u-M) dx=0.
$$
Since $\eta\geq 0$, it follows that $u\leq M$ in $B_{1/2}$, so $u\in L^\infty_{loc}(B_1)$ which completes our proof.
\end{proof}
\begin{lemma}\label{gt2015}
Let $a,b\in (0,N)$ and $\theta\geq 0$. Then, there exists $C>c>0$ such that:
\begin{enumerate}
\item[\rm (i)] If $a+b>N$ one has
\begin{equation}\label{leqe1}
\frac{c\Big(\displaystyle \log\frac{5}{|x|}\Big)^{-\theta}}{|x|^{a+b-N}}\leq
\displaystyle \int_{|y|<1} \frac{\Big(\displaystyle \log\frac{5}{|y|}\Big)^{-\theta} dy}{|x-y|^a|y|^b} \leq \frac{C\Big(\displaystyle \log\frac{5}{|x|}\Big)^{-\theta}}{|x|^{a+b-N}}\quad\mbox{ for all }x\in B_1\setminus\{0\}.
\end{equation}

\item[\rm (ii)] If $a+b=N$, $\theta\neq 1$, one has
\begin{equation}\label{leqe2}
c\Big(\displaystyle \log\frac{5}{|x|}\Big)^{(1-\theta)^+} \leq
\displaystyle \int_{|y|<1} \frac{\Big(\displaystyle \log\frac{5}{|y|}\Big)^{-\theta} dy}{|x-y|^a|y|^b} \leq C \Big(\displaystyle \log\frac{5}{|x|}\Big)^{(1-\theta)^+}
\quad\mbox{ for all }x\in B_1\setminus\{0\}.
\end{equation}

\item[\rm (iii)] If $a+b<N$ one has
\begin{equation}\label{leqe3}
c\leq
\displaystyle \int_{|y|<1} \frac{\Big(\displaystyle \log\frac{5}{|y|}\Big)^{-\theta} dy}{|x-y|^a|y|^b} \leq C
\quad\mbox{ for all }x\in B_1\setminus\{0\}.
\end{equation}

\end{enumerate}

\end{lemma}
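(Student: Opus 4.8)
The plan is to estimate the integral $J(x) := \int_{|y|<1} \big(\log\frac{5}{|y|}\big)^{-\theta} |x-y|^{-a}|y|^{-b}\,dy$ by splitting the ball $B_1$ into the three standard regions relative to the scale $r := |x|$: the region near the origin $\{|y| < r/2\}$, the region near $x$, namely $\{|x-y| < r/2\}$, and the ``far'' region $\{|y| > r/2,\ |x-y|>r/2\}$ (which in particular contains all $y$ with $|y| > 2r$, where $|x-y| \asymp |y|$). On the first region $|x-y| \asymp r$ and the log weight is $\asymp \log\frac{5}{|y|}$, so the contribution is $\asymp r^{-a}\int_{|y|<r/2}\big(\log\frac{5}{|y|}\big)^{-\theta}|y|^{-b}\,dy$; since $b < N$ this integral behaves like $r^{N-b}\big(\log\frac{5}{r}\big)^{-\theta}$ (the log factor is essentially constant on the dyadic scale of integration, up to bounded ratio — one checks this by a dyadic decomposition or by noting $\int_0^{r/2} (\log\frac{5}{s})^{-\theta} s^{N-b-1}\,ds \asymp r^{N-b}(\log\frac{5}{r})^{-\theta}$). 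On the second region $|y| \asymp r$ and $\log\frac{5}{|y|}\asymp\log\frac{5}{r}$, and since $a<N$ the integral $\int_{|x-y|<r/2}|x-y|^{-a}\,dy \asymp r^{N-a}$, giving a contribution $\asymp r^{N-a-b}\big(\log\frac{5}{r}\big)^{-\theta}$. Both of these first two contributions are thus $\asymp r^{N-a-b}\big(\log\frac{5}{r}\big)^{-\theta}$, which already matches the claimed upper/lower bounds in case (i) and is dominated by them in cases (ii), (iii); moreover for the lower bound it suffices to keep just the second region, so the lower bounds in all three cases follow immediately from this estimate (noting $r^{N-a-b} \geq c > 0$ when $a+b \leq N$, and $r^{N-a-b}(\log\tfrac5r)^{-\theta} \geq c(\log\tfrac5r)^{(1-\theta)^+}$ when $a+b=N$).

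The substantive part is the far region $D := \{r/2 < |y| < 1,\ |x-y| > r/2\}$. Here I would pass to polar-type coordinates or, more cleanly, decompose $D$ into dyadic annuli $\{2^{j}r < |y| < 2^{j+1}r\}$ for $j$ ranging from $O(1)$ up to roughly $\log_2(1/r)$. On each such annulus, $\log\frac{5}{|y|} \asymp \log\frac{5}{2^j r}$ and the integral of $|x-y|^{-a}|y|^{-b}$ is comparable to $(2^j r)^{N-a-b}$ when $a+b \neq N$ (using the homogeneity of the kernel and the fact that an annulus of inner radius $\rho=2^jr$ contributes $\asymp \rho^{N-a}\cdot\rho^{-b}$ to $\int_{\rho<|y|<2\rho}|x-y|^{-a}|y|^{-b}$, which is elementary when $|x|\le\rho$), and comparable to $\log 2$ times the log-weight when $a+b=N$. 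Summing the geometric-type series $\sum_j (2^j r)^{N-a-b}\big(\log\frac{5}{2^j r}\big)^{-\theta}$ then gives: in case (i), since $N-a-b<0$ the sum is dominated by its first term $\asymp r^{N-a-b}\big(\log\frac{5}{r}\big)^{-\theta}$; in case (iii), since $N-a-b>0$ the sum is dominated by its last term, which is $\asymp 1$ (the largest annulus has $|y| \asymp 1$); in case (ii) the powers of $r$ drop out and we are left with $\sum_{j} \big(\log\frac{5}{2^j r}\big)^{-\theta} \asymp \sum_{k=1}^{\log_2(5/r)} k^{-\theta}$, whose size is $\asymp \big(\log\frac{5}{r}\big)^{1-\theta}$ if $\theta < 1$ and $\asymp 1$ if $\theta > 1$ — i.e. $\asymp \big(\log\frac{5}{r}\big)^{(1-\theta)^+}$, which is exactly why $\theta \neq 1$ must be excluded here.

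Combining the three regional estimates yields the matching upper bound in each case, and together with the lower bound from the second region this proves (i)--(iii). The main obstacle, and the only place real care is needed, is the bookkeeping in the far region when $a+b=N$: one must control the comparison constant relating $\int$ over a dyadic annulus to $\log 2$ uniformly in $j$ (so that it does not degrade as $2^j r \to 1$, where $x$ sits near the inner edge of the annulus), and then sum the slowly-varying series $\sum k^{-\theta}$ with the correct power of the logarithm, distinguishing the convergent ($\theta>1$) and divergent ($\theta<1$) cases. The cases $a+b \neq N$ are comparatively routine since there the dyadic sum is a genuine geometric series dominated by a single endpoint term.
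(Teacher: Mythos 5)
Your decomposition (near the origin, near $x$, and a dyadic far region) is a legitimate and genuinely different route from the paper's: the paper gets the lower bound by restricting to $\{|x|<|y|<1\}$ and using $|x-y|\le 2|y|$ to reduce to a one-dimensional radial integral, and gets the upper bound by the rescaling $y=r\eta$ followed by a split into $|\eta|<2$ and $2<|\eta|<2/r$. Your upper-bound argument is sound in all three cases (modulo the care you already flag about the slowly varying log factor in the geometric sums), and your lower bound in case (i) is correct.

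However, your justification of the lower bounds in cases (ii) and (iii) is wrong. You claim they ``follow immediately'' from the second region alone, ``noting $r^{N-a-b}\ge c>0$ when $a+b\le N$'' and ``$r^{N-a-b}(\log\frac 5r)^{-\theta}\ge c(\log\frac 5r)^{(1-\theta)^+}$ when $a+b=N$.'' Both assertions fail as $r\to 0$: if $a+b<N$ then $N-a-b>0$ and $r^{N-a-b}\to 0$ (and the factor $(\log\frac 5r)^{-\theta}\to 0$ as well), so the region-2 contribution $\asymp r^{N-a-b}(\log\frac 5r)^{-\theta}$ does not stay bounded below; and if $a+b=N$ the region-2 contribution is $\asymp(\log\frac 5r)^{-\theta}$, which is smaller by a full factor of $\log\frac 5r$ than the required $(\log\frac 5r)^{1-\theta}$ when $\theta<1$, and tends to $0$ rather than staying $\ge c$ when $\theta>1$. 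The single annulus at scale $r$ simply cannot produce these lower bounds; they come from accumulating \emph{all} scales between $|x|$ and $1$ (in case (iii), essentially from the outermost annulus at scale $\asymp 1$; in case (ii), from summing $\sum_k k^{-\theta}$ over all dyadic scales). The fix is already contained in your own far-region analysis: since you establish the two-sided comparison of each dyadic annulus's contribution with $(2^jr)^{N-a-b}(\log\frac{5}{2^jr})^{-\theta}$, summing the \emph{lower} bounds over $j$ gives exactly $c$ in case (iii) and $c(\log\frac 5r)^{(1-\theta)^+}$ in case (ii) — this is precisely what the paper's restriction to $\{|x|<|y|<1\}$ accomplishes in continuous form. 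As written, though, the stated derivation of the lower bounds in (ii) and (iii) is incorrect.
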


The proof of the above lemma will be given in the Appendix.

\begin{remark}\label{remk}
A direct and useful calculation shows that if
$$
u(x)=\kappa |x|^{-\gamma}\Big(\log\frac{5}{|x|}\Big)^{-\tau}\,,\gamma>0,
$$
then
$$
\begin{aligned}
{\rm div}\big(|x|^{-\alpha}|\nabla u|^{m-2}\nabla u\big)=&\kappa^{m-1}|x|^{-\gamma(m-1)-m-\alpha}\Big(\log\frac{5}{|x|}\Big)^{-\tau(m-1)-m}\times\\
&\times \Big| -\gamma \log\frac{5}{|x|}+\tau \Big|^{m-2}\Big[A\Big(\log\frac{5}{|x|}\Big)^{2}+B \log\frac{5}{|x|}+C  \Big],
\end{aligned}
$$
where
\begin{equation}\label{abc}
\begin{aligned}
A=&\gamma\big[\gamma(m-1)-(N-m-\alpha)\big],\\
B=&\tau\big[-2\gamma (m-1)+(N-m-\alpha)\big],\\
C=&(m-1)\tau(\tau+1).
\end{aligned}
\end{equation}
\end{remark}

\section{Proof of Theorem \ref{thm1}}

\noindent (i) {  Let
\begin{equation}\label{gamm0}
0<\gamma<\min\left\{\frac{\beta}{p}, \frac{m+\alpha}{q-m+1}  \right\}.
\end{equation}
We show that $u(x)=\kappa |x|^{-\gamma}$ is a singular radially symmetric solution of \eqref{main} for suitable $\kappa\in (0,1)$. Since from \eqref{gamm0} we have $p\gamma<\beta<N$ it follows that $u\in L^p(B_1)$.
By Remark \ref{remk} (in which we take $\tau=0$) one has
\begin{equation}\label{gamm2}
{\rm div}\big(|x|^{-\alpha}|\nabla u|^{m-2}\nabla u\big)=\kappa^{m-1}\gamma^{m-2} A |x|^{-(m-1)\gamma-m-\alpha} \quad\mbox{ in }B_1\setminus\{0\},
\end{equation}
where $A $ is defined in \eqref{abc}$_1$. From $N\le m+\alpha$ and $\gamma>0$ we have $A>0$.
Further, since $p\gamma<\beta$, by Lemma \ref{gt2015}(iii) with $\theta=0$, $a=N-\beta$, $b=p\gamma$ and $a+b<N$, we estimate
\begin{equation}\label{gamm01}
(I_\beta\ast u^p)u^{q}(x)\asymp \kappa^{p} u^q(x)=  \kappa^{p+q} |x|^{-\gamma q} \quad\mbox{ in }B_1\setminus\{0\}.
\end{equation}
Comparing \eqref{gamm2} and \eqref{gamm01} we see that for $\kappa\in (0,1)$ small enough, thanks to \eqref{gamm0} and $q>m-1$,
 one has that $u(x)=\kappa|x|^{-\gamma}$ is a singular positive solution of \eqref{main}.
}

\noindent (ii) Let $u$ be a positive singular solution of \eqref{main}. Using Propositions \ref{estg} and \ref{estko},
there exists $C>0$ such that for small $R>0$ we find
\begin{equation}\label{csigma}
C R^{-\sigma}\geq \sup_{|x|=R} u\geq cR^{-\frac{N-m-\alpha}{m-1}},
\end{equation}
where $\sigma>0$ is defined in \eqref{sig}. The above estimate implies $\sigma\geq \frac{N-m-\alpha}{m-1}$ which is equivalent to
$p+q\le (N+\beta)(m-1)/(N-m-\alpha)$.

We claim that both inequalities are strict. Assume by contradiction that
$\sigma= \frac{N-m-\alpha}{m-1}$ and let $x\in B_{1/2}\setminus\{0\}$. Combining the estimate \eqref{csigma} with the weak Harnack inequality \eqref{whi} with $a=1$, $b=1/2$, $c=1/8$ and $\ell=p>m-1$,  we find
$$
\begin{aligned}
(I_\beta\ast u^p)(x)&\geq \int_{B_{5|x|/4}\setminus B_{|x|/4}}\frac{u^p(y)}{|x-y|^{N-\beta}}dy\\
&\geq \Big(\frac{9|x|}{4}  \Big)^{\beta-N} \int_{B_{5|x|/4}\setminus B_{|x|/4}}u^p(y)dy\\
&\geq C|x|^{\beta-N}\Big(|x|^{N/p}\sup_{\partial B_{|x|}}u\Big)^p && \qquad\mbox{(by Harnack's inequality \eqref{whi})}\\
&\geq C|x|^{\beta-\sigma p} &&\qquad\mbox{ (by estimate \eqref{csigma})}.
\end{aligned}
$$
Hence, $u$ satisfies
$$
{\rm div}\big(|x|^{-\alpha}|\nabla u|^{m-2}\nabla u\big)\geq C|x|^{\beta-\sigma p}u^q \qquad \mbox{ in } B_{1/2}\setminus\{0\}.
$$
For any $k\geq 3$ let $v_k\in C^1(B_{1/2}\setminus B_{1/k})$ be a radial function such that
$$
\left\{
\begin{aligned}
{\rm div}\big(|x|^{-\alpha}|\nabla v_k|^{m-2}\nabla v_k\big)&=C|x|^{\beta-\sigma p}v_k^q  &&\quad\mbox{ in }B_{1/2}\setminus\overline B_{1/k},\\
v_k&=\sup_{|x|=1/k} u &&\quad\mbox{ on }\partial B_{1/k},\\
v_k&=\sup_{|x|=1/2} u &&\quad\mbox{ on }\partial B_{1/2}.
\end{aligned}
\right.
$$
Observe that $u$ is a subsolution while $c\Phi_{m,\alpha}$ is a supersolution of the above problem for suitable $c>0$.
By the maximum principle we find that $k\longmapsto v_k$ is increasing and
\begin{equation}\label{equu1}
c\Phi_{m,\alpha}\geq v_k\geq u\quad\mbox{ in }B_{1/2}\setminus B_{1/k},
\end{equation}
for some constant $c>0$.
Thus, there exists $v(x)=\lim_{k\to \infty}v_k(x)$ for all $x\in \overline{B}_{1/2}\setminus\{0\}$ and $v\in C^1(B_{1/2}\setminus\{0\})$ satisfies
\begin{equation}\label{equu2}
{\rm div}\big(|x|^{-\alpha}|\nabla v|^{m-2}\nabla v\big)=C|x|^{\beta-\sigma p} v^q \quad\mbox{ in }B_1\setminus\{0\}.
\end{equation}
Also $v$ is radial (since $v_k$ is radial) and from \eqref{equu1} we find
\begin{equation}\label{equu3}
c\Phi_{m,\alpha}\geq v\geq u\quad\mbox{ in }B_{1/2}\setminus \{0\}.
\end{equation}
Using this inequality it is easy to see that $v$ satisfies the conditions of Proposition \ref{strongh}
with
$$
a(x)=|x|^{\beta-\sigma p}v(x)^{q-m+1}\le c |x|^{\beta-\sigma(p+q-m+1)}=c|x|^{-m-\alpha}.
$$
Thus, by \eqref{harns}, \eqref{csigma} and \eqref{equu3} we find
$$
v(x)\geq c\sup_{|y|=|x|}v(y)\geq C|x|^{-\sigma}\quad\mbox{ for all }x\in B_{1/4}\setminus\{0\}.
$$
From  \eqref{equu2} and the above estimate we find
$$
\Big(r^{N-1-\alpha}|v'(r)|^{m-2}v'(r)\Big)'=Cr^{N-1+\beta-\sigma p} v^q\geq C r^{N-1+\beta-\sigma(p+q)} \quad\mbox{ for all } 0<r<1/4.
$$
Since
$$
\sigma= \frac{N-m-\alpha}{m-1}=\frac{m+\alpha+\beta}{p+q-m+1},
$$
the above estimate reads
$$
\Big(r^{N-1-\alpha}|v'(r)|^{m-2}v'(r)\Big)'\geq Cr^{-1}\quad\mbox{ for all } 0<r<1/4.
$$
We now fix $\bar{r}\in (0, 1/4)$ and integrate in the above inequality over $[r,\bar{r}]$. We obtain
$$
-r^{N-1-\alpha}|v'(r)|^{m-2}v'(r)\geq -\bar{r}^{N-1-\alpha}|v'(\bar{r})|^{m-2}v'(\bar{r})+C\ln\frac{\bar{r}}{r} \quad\mbox{ for all } 0<r<\bar{r}<1/4.
$$
From here we have
$$
\lim_{r\to 0^+} r^{N-1-\alpha}|v'(r)|^{m-2}v'(r)=-\infty,
$$
so that
$$
\lim_{r\to 0^+} \frac{v'(r)}{r^{-\frac{N-1-\alpha}{m-1}}}=-\infty.
$$
By l'Hopital's rule it follows that
$$
\lim_{r\to 0^+} \frac{v(r)}{\Phi_{m,\alpha}(r)}=\infty,
$$
which contradicts \eqref{equu3} and proves our claim. Hence
$\sigma> \frac{N-m-\alpha}{m-1}$ which yields \eqref{cond}$_2$. Also, \eqref{cond}$_3$ follows from \eqref{cond}$_2$ and the fact that $p,q>m-1$.

To derive the first inequality in \eqref{cond} we combine the weak Harnack inequality and Proposition \ref{estg} with the regularity condition $u\in L^p(B_1)$. We find

$$
\begin{aligned}
\infty &>\int_{B_{1/2}} u^p\geq \sum_{k=1}^\infty \int_{2^{-1-3k}<|y|<2^{2-3k}}u^p(y)dy&&\\
&\geq \frac{C}{2^N} \sum_{k=1}^\infty 2^{-3kN} \Big(\sup_{\frac{5}{2}\cdot  2^{-3k}<|y|<\frac{7}{2}\cdot  2^{-3k}} u(y)\Big)^p&& \mbox{ (by \eqref{whi} with $R=2^{-1-3k}$, $a=7$, $b=5$, $c=1/2$)}\\
&\geq \frac{C}{2^N} \sum_{k=1}^\infty 2^{-3kN} \Big(\sup_{ |y|=3\cdot2^{-3k}} u(y)\Big)^p\\
&\geq \frac{C}{2^N \cdot 3^{\frac{N-m-\alpha}{m-1}p}}   \sum_{k=1}^\infty \frac{1}{(8^{N-\frac{N-m-\alpha}{m-1}p})^{k}}.
&& \mbox{ (by Proposition \ref{estg})}
\end{aligned}
$$
This implies $N-\frac{N-m-\alpha}{m-1}p>0$ which establishes the first inequality in \eqref{cond} for $p$.

If $q\geq \frac{N(m-1)}{N-m-\alpha}$, by Proposition \ref{vv}  we deduce $u\in L^\infty(B_1)$, which is not possible since $u$ is singular.
Hence, $q<\frac{N(m-1)}{N-m-\alpha}$.

Conversely, assume that \eqref{cond} holds.
We construct a singular radially symmetric solution $u$ of \eqref{main} in the form $u(x)=\kappa|x|^{-\gamma}$,
with $\kappa,\gamma>0$ to be determined.

\noindent{\bf Case 1: } $\sigma p>\beta$.

Let
\begin{equation}\label{gamm}
\max\Big\{\frac{N-m-\alpha}{m-1}, \frac\beta p\Big\}<\gamma<\min\left\{\sigma, \frac{N}{p}  \right\}.
\end{equation}
Note that this choice of $\gamma$ is possible thanks to \eqref{cond}$_1$ and to our assumption $\sigma>\frac{N-m-\alpha}{m-1}$.
Also, $u(x)=\kappa|x|^{-\gamma}$ satisfies \eqref{gamm2}, where now the positivity of $A$ follows from the lower bound of $\gamma$.

By Lemma \ref{gt2015}(i) with $\theta=0$, $a=N-\beta$, $b=p\gamma$  so that $a+b>N$  being $p\gamma>\beta$, we find
\begin{equation}\label{gamm1}
(I_\beta\ast u^p)u^{q}(x)\leq C\kappa^{p}|x|^{\beta-p\gamma}u^q(x)\leq C \kappa^{p+q} |x|^{\beta-(p+q)\gamma} \quad\mbox{ in }B_1\setminus\{0\}.
\end{equation}

Using \eqref{gamm2},  \eqref{gamm1} and the fact that $p+q>m-1$ together with $\gamma<\sigma$, we may take $\kappa\in (0,1)$ small enough such that
$$
\begin{aligned}
{\rm div}\big(|x|^{-\alpha}|\nabla u|^{m-2}\nabla u\big)&=C_1\kappa^{m-1}|x|^{-(m-1)\gamma-m-\alpha}\\
&\geq C_2 \kappa^{p+q} |x|^{\beta-(p+q)\gamma}\\
& \geq (I_\beta\ast u^p)u^{q}(x)
\quad\mbox{ in }B_1\setminus\{0\}.
\end{aligned}
$$
This shows that $u(x)=\kappa|x|^{-\gamma}$ is a positive singular solution of \eqref{main} in $B_1\setminus\{0\}$.
\medskip

\noindent{\bf Case 2: } $\sigma p\leq \beta$.

Let us observe first that this condition is equivalent to
$$
\frac{m+\alpha}{q-m+1}\leq \sigma\leq \frac{\beta}{p}.
$$
Indeed, by replacing in $\sigma p\leq \beta$ the value of $\sigma$ given in \eqref{sig},   we get
$$(m+\alpha)p\le \beta (q-m+1).$$
Adding $(m+\alpha)(q-m+1)$ on both sides of the above inequality we find

\begin{equation}\label{ineq_1_1b}(m+\alpha)(p+q-m+1)\leq (m+\alpha+\beta)(q-m+1),\end{equation}
namely
\begin{equation}\label{ineq1b}\frac{m+\alpha}{q-m+1}\leq \frac{m+\alpha+\beta}{p+q-m+1}=\sigma,
\end{equation}
thus the required lower bound for $\sigma$ follows, as the upper bound trivially holds since we are in Case 1b.

Let
$$
\frac{N-m-\alpha}{m-1}<\gamma<\frac{m+\alpha}{q-m+1}\leq \sigma\leq \frac{\beta}{p}.
$$
(Note that from \eqref{cond}$_1$ we have $\frac{N-m-\alpha}{m-1}<\frac{m+\alpha}{q-m+1}$).
Letting $u(x)=\kappa |x|^{-\gamma}$, we have that $u$ satisfies \eqref{gamm2}, where here $A>0$ by the lower bound of $\gamma$.
Also, by Lemma \ref{gt2015}(iii) with $\theta=0$, $a=N-\beta$, $b=p\gamma$ so that $a+b<N$ being $\beta >p\gamma$, we have
\begin{equation}\label{gamm3}
(I_\beta\ast u^p)u^{q}(x)\leq C\kappa^{p} u^q(x)\leq C \kappa^{p+q} |x|^{-q\gamma} \quad\mbox{ in }B_1\setminus\{0\}.
\end{equation}
Combining \eqref{gamm2} and \eqref{gamm3} in the same way as we did in Case 1 we derive that $u(x)=\kappa |x|^{-\gamma}$ is a singular solution of \eqref{main}.

\qed

\section{Proof of Theorem \ref{thm2}}

(i) Any singular solution $u$ of \eqref{main1} fulfills in particular \eqref{main}. Thus, by Theorem \ref{thm1} conditions \eqref{cond} must hold if $N>m+\alpha$.

{
Conversely, assume now that either $N\leq m+\alpha$ or $N>m+\alpha$ and \eqref{cond} holds. Let  $\sigma$ be defined by \eqref{sig} and $\tau=\frac{1}{p+q-m+1}>0$.

We claim that
$$
u(x)=
\left\{
\begin{aligned}
&|x|^{-\sigma} &&\quad\mbox{ if }\sigma p>\beta,\\
&|x|^{-\sigma} \Big(\log\frac{5}{|x|}\Big)^{-\tau} &&\quad\mbox{ if }\sigma p=\beta,\\
&|x|^{-\frac{m+\alpha}{q-m+1}}&&\quad\mbox{ if }\sigma p<\beta,
\end{aligned}
\right.
$$
is a solution of \eqref{main1}. A straightforward calculation using Remark \ref{remk}
yields
$$
{\rm div}\big(|x|^{-\alpha}|\nabla u|^{m-2}\nabla u\big)\asymp
\left\{
\begin{aligned}
&|x|^{-\sigma(m-1)-m-\alpha} &&\quad\mbox{ if }\sigma p>\beta,\\
&|x|^{-\sigma(m-1)-m-\alpha} \Big(\log\frac{5}{|x|}\Big)^{-(m-1)\tau} &&\quad\mbox{ if }\sigma p=\beta,\\
&|x|^{-\frac{q(m+\alpha)}{q-m+1}}&&\quad\mbox{ if }\sigma p<\beta.
\end{aligned}
\right.
$$
To see this  we first note that \eqref{cond}$_2$ implies
\begin{equation}
\sigma>  \frac{N-m-\alpha}{m-1}.
\end{equation}
Thus, the coefficient $A$ defined in \eqref{abc} (in which $\gamma=\sigma$) satisfies $A>0$.

Also, by Lemma \ref{gt2015}(i)-(iii) (we use $\theta=\tau p\in (0,1)$ if $\sigma p=\beta$) we have
$$
(I_\beta\ast u^p)u^q(x)\asymp
\left\{
\begin{aligned}
&|x|^{\beta- \sigma(p+q)} &&\quad\mbox{ if }\sigma p>\beta,\\
&|x|^{-q\sigma} \Big(\log\frac{5}{|x|}\Big)^{1-\tau(p+q)} &&\quad\mbox{ if }\sigma p=\beta,\\
&|x|^{-\frac{q(m+\alpha)}{q-m+1}}&&\quad\mbox{ if }\sigma p<\beta,
\end{aligned}
\right.
$$
where in the latter case $\sigma p<\beta$, from Case 2   in the proof of Theorem \ref{thm1}(ii), we have that \eqref{ineq_1_1b}
holds with the strict sign so that we fall in Case(iii) of  Lemma \ref{gt2015}.}

From the above estimates we have
$$
{\rm div}\big(|x|^{-\alpha}|\nabla u|^{m-2}\nabla u\big)\asymp (I_\alpha\ast u^p)u^q
$$
and thus, for suitable constants $a\geq b>0$ we have that $u$ satisfies \eqref{main1}.

(ii) Let $u$ be a singular solution of \eqref{main1}. We divide our argument into two steps.

\noindent{\bf Step 1: $u$ satisfies the strong Harnack inequality \eqref{harns}.}

Note first that $u$ satisfies the inequality
$$
{\rm div}\big(|x|^{-\alpha}|\nabla u|^{m-2}\nabla u\big)\geq c u^{q}\quad\mbox{ in }B_1\setminus\{0\},
$$
where $c=2^{\alpha-N}\int_{B_1}u^p dx>0$.
Applying Proposition \ref{lem1} with $\theta=0$ we find
\begin{equation}\label{eqw8}
u(x)\leq C|x|^{-\frac{m+\alpha}{q-m+1}}\quad\mbox{ in }B_1\setminus\{0\}.
\end{equation}
Using the above estimate (if $\sigma p<\beta$) and \eqref{ttau} (if $\sigma p>\beta$), from   Lemma \ref{gt2015}(i),(iii) we obtain
\begin{equation}\label{eqw9}
(I_\beta\ast u^p)(x)\leq C\varphi(x) \quad\mbox{ in }B_1\setminus\{0\},
\end{equation}
where
\begin{equation}\label{eqw91}
\varphi(x)=|x|^{-(\sigma p-\beta)^+}.
\end{equation}
(we take $(\sigma p-\beta)^+=0$ if $\sigma p-\beta\leq 0$).
Now, \eqref{eqw9} together with \eqref{eqw8} (if $\sigma p<\beta$) and \eqref{ttau} (if $\sigma p>\beta$) imply
$$
(I_\beta\ast u^p) u^{q-m+1}\leq C|x|^{-m-\alpha}\quad\mbox{ in }B_1\setminus\{0\}.
$$
We are exactly in the frame of Proposition \ref{strongh} which yields \eqref{harns}.

\medskip

\noindent{\bf Step 2: Proof of \eqref{eqe1}-\eqref{eqe2}.}

Our analysis is split into two cases.

\noindent{\bf Case 1: } Suppose
\begin{equation}\label{suppp}
\limsup_{x\to 0}\frac{u(x)}{\Phi_{m,\alpha}(x)}<\infty.
\end{equation}
Let $c>0$ be such that $u(x)\leq c\Phi_{m,\alpha}(x)$ in $\overline B_1\setminus\{0\}$. By Lemma \ref{gt2015} we have
\begin{equation}\label{ia}
I_\beta\ast u^p\leq I_\beta\ast (c\Phi_{m,\alpha})^p\leq C|x|^{-\theta}\quad\mbox{ in }B_1\setminus\{0\},
\end{equation}
where
\begin{equation}\label{bett}
\theta=\left\{
\begin{aligned}
&p\frac{N-m-\alpha}{m-1}-\beta &&\quad\mbox{ if }\; p\frac{N-m-\alpha}{m-1}>\beta,\\
&\tau&&\quad\mbox{ if }\; p\frac{N-m-\alpha}{m-1}=\beta,\\
&0 &&\quad\mbox{ if }\; p\frac{N-m-\alpha}{m-1}<\beta,
\end{aligned}
\right.
\end{equation}
and $\tau>0$ is chosen small enough such that\footnote{We choose  $\tau$ with the above conditions just to avoid the log terms that appear in estimating the convolution integrals}
$$
q<\frac{N-(\sigma p-\beta)^+-\tau }{N-m-\alpha}(m-1).
$$
Also, by the definition \eqref{bett} of $\theta$ and \eqref{cond} we have $0\leq \theta<m+\alpha$,
this latter condition is required in the statement of Lemma \ref{lem2}.
Indeed, this is easy to check if $p\frac{N-m-\alpha}{m-1}\leq \beta$. If $p\frac{N-m-\alpha}{m-1}>\beta$ then we observe
that from \eqref{cond}$_2$ and $q>m-1$ we find
$$
p<\frac{m+\alpha+\beta}{N-m-\alpha}(m-1),
$$
and then
$$
\theta=p\frac{N-m-\alpha}{m-1}-\beta<m+\alpha.
$$

Since $u$ is a singular solution of \eqref{main1}, there exists
a decreasing sequence $\{r_k\}\subset (0,1)$, $r_k\to 0$ (as $k\to \infty$) such that
$$
\sup_{|x|=r_k}u(x) \to \infty\quad\mbox{ as }k\to \infty.
$$
Using the strong Harnack inequality \eqref{harns} we also have
\begin{equation}\label{infff}
\inf_{|x|=r_k} u(x) \to \infty\quad\mbox{ as }k\to \infty.
\end{equation}
For any $k\geq 1$ let $w_k\in C^1(B_1\setminus B_{r_k})$ be a radial function such that
$$
\left\{
\begin{aligned}
{\rm div}\big(|x|^{-\alpha}|\nabla w_k|^{m-2}\nabla w_k\big)&=C|x|^{-\theta} w_k^q &&\quad\mbox{ in }B_1\setminus\overline B_{r_k},\\
w_k&=\inf_{|x|=r_k} u &&\quad\mbox{ on }\partial B_{r_k},\\
w_k&=\inf_{|x|=1} u &&\quad\mbox{ on }\partial B_{1}.
\end{aligned}
\right.
$$
Since $u$ satisfies \eqref{ia}, by the maximum principle we find that $k\longmapsto w_k$ is increasing and $u\geq w_k$ in $B_1\setminus B_{r_k}$. Thus, there exists $w(x)=\lim_{k\to \infty}w_k(x)$ for all $x\in \overline{B}_1\setminus\{0\}$ and $w\in C^1(B_1\setminus\{0\})$ satisfies
$$
{\rm div}\big(|x|^{-\alpha}|\nabla w|^{m-2}\nabla w\big)=C|x|^{-\theta} w^q \quad\mbox{ in }B_1\setminus\{0\}.
$$
Also $w$ is singular since by \eqref{infff} we have
$$
\sup_{|x|=r_k} w\geq \sup_{|x|=r_k}w_k=\inf_{|x|=r_k} u \to \infty\quad\mbox{ as }k\to \infty.
$$
Thus, by \eqref{suppp} and Lemma \ref{lem2}  (which can be applied since in the first and in the second case of \eqref{bett},
condition \eqref{range_q_asy_beh} implies \eqref{range_q_theta} being $\sigma>(N-m-\alpha)/(m-1)$  by virtue of \eqref{cond}$_2$,
the third case of \eqref{bett} condition \eqref{range_q_asy_beh} is exactly \eqref{range_q_theta})
we deduce $u\asymp \Phi_{m,\alpha}$.

\noindent{\bf Case 2: } Suppose
$$
\limsup_{x\to 0}\frac{u(x)}{\Phi_{m,\alpha}(x)}=\infty.
$$
Hence, one may find a decreasing sequence $\{r_k\}\subset (0,1)$, $r_k\to 0$ (as $k\to \infty$) such that
$$
\sup_{|x|=r_k}\frac{u(x)}{\Phi_{m,\alpha}(x)}\to \infty\quad\mbox{ as }k\to \infty.
$$
Using the strong Harnack inequality \eqref{harns} for $u$ and the fact that $\Phi_{m,\alpha}(x)=\Phi_{m,\alpha}(|x|)$, one has
\begin{equation}\label{inff}
\inf_{|x|=r_k}\frac{u(x)}{\Phi_{m,\alpha}(x)}\to \infty\quad\mbox{ as }k\to \infty.
\end{equation}
Recall that $u$ satisfies \eqref{eqw9}-\eqref{eqw91}. For any $k\geq 1$ let $w_k\in C^1(B_1\setminus B_{r_k})$ be a radial function such that
$$
\left\{
\begin{aligned}
{\rm div}\big(|x|^{-\alpha}|\nabla w_k|^{m-2}\nabla w_k\big)&=C\varphi(x)w_k^q &&\quad\mbox{ in }B_1\setminus\overline B_{r_k},\\
w_k&=\inf_{|x|=r_k} u &&\quad\mbox{ on }\partial B_{r_k},\\
w_k&=\inf_{|x|=1} u &&\quad\mbox{ on }\partial B_{1},
\end{aligned}
\right.
$$
where $\varphi$ is defined in \eqref{eqw9}.
By the maximum principle we find that $k\longmapsto w_k$ is increasing and $u\geq w_k$ in $B_1\setminus B_{r_k}$.
 Thus, there exists $w(x)=\lim_{k\to \infty}w_k(x)$ for all $x\in \overline{B}_1\setminus\{0\}$ and
$$
{\rm div}\big(|x|^{-\alpha}|\nabla w|^{m-2}\nabla w\big)=C\varphi(x)w^q \quad\mbox{ in }B_1\setminus\{0\}.
$$
In particular, $w$ satisfies \eqref{eqte} with $\theta=(\sigma p-\beta)^+<m+\alpha$ since $q>m-1$, and
\begin{equation}\label{eqestt}
u\geq w\geq w_k\quad\mbox{ in }B_1\setminus B_{r_k}.
\end{equation}
Using the above estimates and \eqref{inff} it follows that
$$
\limsup_{x\to 0}\frac{ u(x)}{\Phi_{m,\alpha}(x)}\geq \lim_{k\to 0} \sup_{|x|=r_k} \frac{w(x)}{\Phi_{m,\alpha}(x)}\geq
\lim_{k\to 0} \sup_{|x|=r_k} \frac{w_k(x)}{\Phi_{m,\alpha}(x)}=\infty.
$$
By Lemma \ref{lem2} it follows that
$$
w\asymp |x|^{-\frac{m+\alpha-(\sigma p-\beta)^+}{q-m+1}}.
$$
This fact combined with \eqref{eqw8} (if $\sigma p<\beta$) and \eqref{ttau} (if $\sigma p>\beta$) implies the estimates in Theorem \ref{thm2}(ii).

\bigskip

\noindent
{\bf Aknowledgements.} RF was partly supported by the Italian MIUR project
{\em Variational methods, with applications to problems in mathematical physics and
geometry} (2015KB9WPT\_009) and by {\em Fondo Ricerca di
Base di Ateneo Esercizio} 2017-19 of the University of Perugia, named {\em Problemi
con non linearit\`a dipendenti dal gradiente}.
RF is a  member of the {\em Gruppo Nazionale per
l'Analisi Ma\-te\-ma\-ti\-ca, la Probabilit\`a e le loro Applicazioni}
(GNAMPA) of the {\em Istituto Nazionale di Alta Matematica} (INdAM).

\section*{Appendix: Proof of Lemma \ref{gt2015} }

In this section we present the proof of Lemma \ref{gt2015} which is rather technical.
For reader's convenience we include all details. We first establish the lower bound in the estimates \eqref{leqe1}-\eqref{leqe3}, that is,
$$
\int_{|y|<1} \frac{\Big( \log\frac{5}{|y|}\Big)^{-\theta} dy}{|x-y|^a|y|^b}  \geq
c\left\{
\begin{aligned}
&\frac{\Big(\displaystyle \log\frac{5}{|x|}\Big)^{-\theta}}{|x|^{a+b-N}}&&\quad\mbox{ if }a+b>N,\\
&\Big(\displaystyle \log\frac{5}{|x|}\Big)^{1-\theta} &&\quad\mbox{ if }a+b=N\mbox{ and }{\theta\neq 1},\\
&1 &&\quad\mbox{ if }a+b<N.
\end{aligned}
\right.
$$
It is enough to establish the above inequality for all $x\in B_{1/2}\setminus\{0\}$. Then, since all involved functions
are continuous on $\overline B_1\setminus B_{1/2}$ we may take a smaller constant $c>0$ such that the above estimate still
holds for all $x\in B_1\setminus\{0\}$.

Indeed, if we denote by $\phi(x)$ the function on RHS of the above estimate, then, for $1/2\leq |x|<1$ we have (since $|x-y|\leq |x|+|y|<2$)
$$
\int_{|y|<1} \frac{\Big( \log\frac{5}{|y|}\Big)^{-\theta} dy}{|x-y|^a|y|^b}\geq
\int_{|y|<1} \frac{\Big( \log\frac{5}{|y|}\Big)^{-\theta} dy}{2^a|y|^b}
= C_1(=const.)\geq \frac{C_1}{C_2}\phi(x),
$$
where
$$
C_2=\max_{1/2\leq |x|\leq 1}\phi(x).
$$
This shows that the inequality holds true on $B_1\setminus B_{1/2}$ so we need only to prove it on $B_{1/2}\setminus\{0\}$.

\bigskip

Observe that
$$
\begin{aligned}
\int_{|y|<1} \frac{\Big( \log\frac{5}{|y|}\Big)^{-\theta} dy}{|x-y|^a|y|^b} & \geq \int_{|x|<|y|<1} \frac{\Big( \log\frac{5}{|y|}\Big)^{-\theta} dy}{|x-y|^a|y|^b}\\
& \geq \int_{|x|<|y|<1} \frac{\Big( \log\frac{5}{|y|}\Big)^{-\theta} dy}{(2|y|)^a|y|^b}\\
&=\omega_N 2^{-a}\int_{|x|}^1  t^{N-a-b}\Big(\log\frac{5}{t}\Big)^{-\theta}\frac{dt}{t},
\end{aligned}
$$
where $\omega_N$ is the surface area of the unit ball in $\R^N$.
From here we estimate as follows:
\bigskip

(i1) If $a+b>N$ then
$$
\int_{|x|}^1  t^{N-a-b}\Big(\log\frac{5}{t}\Big)^{-\theta}\frac{dt}{t}
\geq
\Big(\log\frac{5}{|x|}\Big)^{-\theta}
\int_{|x|}^1  t^{N-a-b}\frac{dt}{t}\geq  c\,\frac{\Big(\log\frac{5}{|x|}\Big)^{-\theta}}{|x|^{a+b-N}},
$$
if $0<|x|<1/2$, with $c=\frac{1-2^{N-a-b}}{a+b-N}$.

\bigskip

(i2) If $a+b=N$ then, for any $0<|x|<1/2$ we have
$$
\begin{aligned}
\int_{|x|}^1  t^{N-a-b}\Big(\log\frac{5}{t}\Big)^{-\theta}\frac{dt}{t}&=
\int_{|x|}^1  \Big(\log\frac{5}{t}\Big)^{-\theta}\frac{dt}{t}\\
&=\left\{
\begin{aligned}&\frac{1}{1-\theta}\Big[\Big(\log \frac{5}{|x|}\Big)^{1-\theta}-
\Big(\log 5\Big)^{1-\theta}\Big]&&\quad\mbox{ if } 0\leq \theta\neq 1\\
&\log\biggl(\frac 1{\log 5}\cdot\log \frac{5}{|x|}\biggr)&&\quad\mbox{ if }  \theta=1 \end{aligned}\right.
\\&\geq c
\left\{
\begin{aligned}
&\Big(\log \frac{5}{|x|}\Big)^{1-\theta} &&\quad\mbox{ if } 0\leq \theta<1,\\
&1 &&\quad\mbox{ if }\theta \ge1.
\end{aligned}
\right.
\end{aligned}
$$
Indeed, if $\theta=1$  then
$$\int_{|x|}^1  \Big(\log\frac{5}{t}\Big)^{-\theta}\frac{dt}{t}=\log\biggl(\frac 1{\log 5}\cdot\log \frac{5}{|x|}\biggr)\ge
 \log\biggl(\frac{\log 10}{\log5}\biggr), \qquad 0<|x|<\frac12,$$
 while for $\theta>1$ we have
$$\frac{1}{1-\theta}\Big[\biggl(\log \frac{5}{|x|}\biggr)^{1-\theta}-
(\log 5)^{1-\theta}\Big]\ge \frac{(\log 5)^{1-\theta}-(\log 10)^{1-\theta} }{\theta-1}>0;$$
finally if $0\le\theta<1$ we have
$$\frac{\Big(\log {5}/{|x|}\Big)^{1-\theta}}{1-\theta}\Big[1-
\Big(\frac{\log 5}{\log {5}/{|x|}}\Big)^{1-\theta}\Big]\ge \frac{(\log 10)^{1-\theta}}{1-\theta}\Big[1-
\Big(\frac{\log 5}{\log 10}\Big)^{1-\theta}\Big]>0.$$
\bigskip

(i3) If $a+b<N$, and  $0<|x|<1/2$ we have
$$
\int_{|x|}^1  t^{N-a-b}\Big(\log\frac{5}{t}\Big)^{-\theta}\frac{dt}{t}\geq
\int_{1/2}^1  t^{N-a-b}\Big(\log\frac{5}{t}\Big)^{-\theta}\frac{dt}{t}\geq
 \frac{1-2^{a+b-N}}{N-a-b}(\log 10)^{-\theta}>0.$$

In order to establish the upper bounds in the estimates \eqref{leqe1}-\eqref{leqe3} we proceed as in \cite[Lemma 3.6]{GT2015} (see also \cite[Lemma 10.4]{GT2016book}).
 Let $r=|x|\in (0,1)$ and use the the change of variables $x=r\zeta$, $y=r\eta$. In particular, we have $|\zeta|=1$. Thus

$$
\begin{aligned}
\int_{|y|<1} \frac{\Big( \log\frac{5}{|y|}\Big)^{-\theta} dy}{|x-y|^a|y|^b} & \leq \int_{|y|<2} \frac{\Big(\log\frac{5}{|y|}\Big)^{-\theta} dy}{|x-y|^a|y|^b} \\
&=r^{N-a-b}  \int_{|\eta|<2/r} \frac{\Big(\ \log\frac{5}{r|\eta|}\Big)^{-\theta} d\eta}{|\zeta-\eta|^a|\eta|^b}\\
&\leq r^{N-a-b}\left[\Big(\displaystyle \log\frac{5}{2r}\Big)^{-\theta} \int_{0<|\eta|<2} \frac{d\eta}{|\zeta-\eta|^a|\eta|^b}
+\int_{2<|\eta|<2/r} \frac{\Big( \log\frac{5}{r|\eta|}\Big)^{-\theta} d\eta}{|\zeta-\eta|^a|\eta|^b}\right]\\
&\leq r^{N-a-b}\left[A\Big(\displaystyle \log\frac{5}{2r}\Big)^{-\theta} +\int_{2<|\eta|<2/r}
\frac{\Big( \log\frac{5}{r|\eta|}\Big)^{-\theta}
 d\eta}{(|\eta|/2)^a|\eta|^b}\right]\\
\end{aligned}
$$
where
$$
A=\max_{|\zeta|=1} \int_{0<|\eta|<2} \frac{d\eta}{|\zeta-\eta|^a|\eta|^b}  \in (0, \infty),
$$
and where we have used the trivial property $|\zeta-\eta|^a\ge ||\zeta|-|\eta||^a=(|\eta|-1)^a\ge (|\eta|/2)^a$, when $|\eta|>2$.

By virtue of
$$\frac{\log 5/(2r)}{\log 5/r}\ge 1-\frac{\log2}{\log5} ,\quad 0<r<1,$$
we immediately derive
$$\Big(\displaystyle \log\frac{5}{2r}\Big)^{-\theta} \le c \Big(\displaystyle \log\frac{5}{r}\Big)^{-\theta}, \quad 0<r<1,$$
so that
$$
\begin{aligned}
\displaystyle \int\limits_{|y|<1} \frac{\Big( \log\frac{5}{|y|}\Big)^{-\theta} dy}{|x-y|^a|y|^b} & \leq Cr^{N-a-b}\left[\Big(\displaystyle \log\frac{5}{r}\Big)^{-\theta} +\int_{2}^{2/r} t^{N-a-b} \Big(\log\frac{5}{r t}\Big)^{-\theta} \frac{dt}{t}\right]\\&:=Cr^{N-a-b} I(r,\theta).
\end{aligned}
$$
Next, a straightforward calculation leads to the desired estimates in the upper bounds of (i)-(iii).
Indeed, we proceed as follows.

(ii1) If $a+b>N$, the upper bound in \eqref{leqe1} is in force, because choosing $\varepsilon\in (0,a+b-N)$ we obtain

$$\int_{2}^{2/r} t^{N-a-b} \Big(\log\frac{5}{r t}\Big)^{-\theta} \frac{dt}{t^{1-\varepsilon+\varepsilon}}\le c \frac{2^{N-a-b+\varepsilon}}{a+b-\varepsilon-N}\biggl(1-r^{a+b-\varepsilon-N}\biggr) \Big(\log\frac{5}{r}\Big)^{-\theta}\le c \Big(\log\frac{5}{r}\Big)^{-\theta},$$
where we have used that there exists $c>0$ such that
$$t^{-\varepsilon}\biggl(\lg\frac 5{rt}\biggr)^{-\theta}\le c\biggl(\log\frac 5r\biggr)^{-\theta}, \quad\mbox{for all } t\in \biggl(2,\frac2r\biggr), \quad r\in(0,1).$$

(ii2) If $a+b=N$ then
$$\int_{2}^{2/r} t^{N-a-b} \Big(\log\frac{5}{r t}\Big)^{-\theta} \frac{dt}{t}=
\begin{cases}\dfrac 1{1-\theta}\Big[\bigl(\log\frac{5}{2r}\bigr)^{1-\theta}-\bigl(\log\frac{5}{2}\bigr)^{1-\theta}\Big]&\quad\mbox{if }0\le \theta\neq1\\
\log\log\frac5r-\log\log\frac52&\quad\mbox{if }\theta=1
\end{cases}
$$
Consequently, also the upper bound in \eqref{leqe2} holds, since, if $\theta>1$, using that
$$\log \frac5r>\log 5, \qquad \log \frac 5{2r} > \log \frac52, \qquad 0<r<1,$$ we have
$$I(r,\theta)\le (\log 5)^{-\theta}+\frac1{\theta-1}\biggl[\biggl(\log\frac{5}{2}\biggr)^{1-\theta}-(\log 5)^{1-\theta}\biggr],
$$
while if $0\le\theta<1$, we arrive to
$$I(r,\theta)\le\biggl[\biggl(\log\frac{5}{r}\biggr)^{-1}+\frac 1{1-\theta}\biggr]\biggl(\log\frac5r\bigr)^{1-\theta}-\frac1{1-\theta}
\biggl(\log\frac52\biggr)^{1-\theta}\le \biggl[\frac 1{\log5} +\frac 1{1-\theta}\biggr]\biggl(\log\frac5r\bigr)^{1-\theta}.
$$

(ii3) When $a+b<N$, the upper bound in \eqref{leqe3} follows immediately since
$$r^{N-a-b} I(r,\theta)\le r^{N-a-b}(\log 5)^{-\theta} + \biggl(\log\frac52\biggr)^{-\theta}\frac{2^{N-a-b}}{N-a-b} \bigl(1-r^{N-a-b}\bigr)\le C$$
for all $0<r<1$.

\qed

\end{document}